\newtheorem{theorem}{Theorem}[section]
\newtheorem{prop}[theorem]{Proposition}
\newtheorem{lemma}[theorem]{Lemma}
\theoremstyle{definition}
\newtheorem{rem}{Remark}
\newcommand{\C}{\mathbf{C}}
\newcommand{\Q}{\mathbf{Q}}
\newcommand{\Z}{\mathbf{Z}}
\newcommand{\cO}{\mathcal{O}}
\newcommand{\cT}{\mathcal{T}}
\DeclareMathOperator{\Frob}{Frob}
\DeclareMathOperator{\SL}{SL}
\DeclareMathOperator{\Gal}{Gal}
\DeclareMathOperator{\PGL}{PGL}
\DeclareMathOperator{\rH}{H}
\DeclareMathOperator{\Spec}{Spec}
\title{On the $p$-adic periods of the modular curve $X(\Gamma_0(p) \cap \Gamma(2))$.}
\author{Adel Betina and Emmanuel Lecouturier}
\address{Universitat Polit\`ecnica de Catalunya} 
\email{adel.betina@upc.edu}  
\address{Universit\'e Paris 7}
\email{emmanuel.lecouturier@imj-prg.fr}  
\begin{document}

\maketitle
\begin{abstract}
We prove a variant of Oesterl\'e's conjecture describing $p$-adic periods of the modular curve $X_0(p)$, with an additional $\Gamma(2)$-structure (and also $\Gamma(3) \cap \Gamma_0(p)$ if $p \equiv 1 \text{ (mod } 3 \text{)}$). We use de Shalit's techniques and $p$-adic uniformization of curves with semi-stable reduction.

\end{abstract}

\section{Introduction}
Let $p \geq 5$ be a prime.
Let $X_0(p)$ be a regular, proper and flat model over $\Z_p$ of the modular curve of level $\Gamma_0(p)$, obtained by several blowing-up of the coarse moduli space of the algebraic stack $\mathfrak{M}_{\Gamma_0(p)}$ parametrizing generalised elliptic curves with $\Gamma_0(p)$-level structure \cite{D-R}. Deligne and Rapoport proved that the special fiber of $X_0(p)$ is semi-stable. Let $K$ be the quadratic unramified extension of $\Q_p$, $\mathcal{O}_K$ be the ring of integers of $K$, $k$ be the residue field of $\mathcal{O}_K$ and $X_0^{rig}(p)_K$ be the rigid variety associated to the proper flat curve $X_0(p)_K$. Since the supersingular points of the special fiber of $X_0(p)_{\mathbf{F}_p}$ are $k$-rational, as well as its irreducible components, Mumford's theorem \cite{M} implies that $X_0^{rig}(p)_K$ is the quotient of a $p$-adic half plane $\mathcal{H}_\Gamma=\mathbf{P}_K^1-\mathcal{L}$ by a Schottky group $\Gamma$, where $\mathcal{L}$ is the set of the limits points of $\Gamma$. Manin and Drinfeld constructed a pairing $\Phi: \Gamma^{ab} \times \Gamma^{ab} \rightarrow K^{\times}$ in \cite{D} and explained how this pairing gives a $p$-adic uniformization of the Jacobian $J_0(p)_K$ of $X_0(p)_K$.

Let $\Delta$ be the dual graph of the special fiber of $X_0(p)_{\cO_K}$. Mumford's construction shows that $\Gamma$ is isomorphic to the fundamental group $\pi_1(\Delta)$. The abelianization of $\Gamma$ is isomorphic to to the augmentation subgroup of the free $\mathbf{Z}$-module with basis the isomorphism classes of supersingular elliptic curves over $\overline{\mathbf{F}}_p$. Oesterl\'e conjectured that the pairing $\Phi$ can be expressed, modulo the principal units, in terms of the modular invariant $j$. E. de Shalit proved this conjecture in \cite{shalit} (up to a sign if $p \equiv 3 \text{ (mod }4\text{)}$). The aim of this paper is to prove a variant of Oesterl\'e conjecture for modular curves with a $\Gamma_0(p) \cap \Gamma(2)$-level structure by replacing $j$ by the modular invariant $\lambda$.

\section*{Notation}
\begin{enumerate}

\item For any algebraic stack $X$ over $\Spec A$ and for any morphism $A \rightarrow B$, we denote by $X_B$ the fiber product $X \times_{\Spec A} \Spec B$.
\item For any algebraic extension $k$ of the field $\Z/p\Z$, we denote by $W(k)$ the ring of Witt vectors associated to $k$ and by $\bar{k}$ the separable closure of $k$.
\item For any congruence subgroup $\Gamma$ of $\SL_2(\mathbf{Z})$, we denote by $\mathfrak{M}_\Gamma$ the stack over $\Z$ whose $S$-points classify generalized elliptic curves over $S$ with a $\Gamma$-level structure.
\item For any congruence subgroup $\Gamma$ of $\SL_2(\mathbf{Z})$ and any $c \in \mathbf{P}^1(\mathbf{Q})$, we denote by $[c]_{\Gamma}$ the cusp of $\Gamma \backslash (\mathcal{H} \cup \mathbf{P}^1(\mathbf{Q}))$ corresponding to the class of $c$, where $\mathcal{H}$ is the (complex) upper-half plane.

\item For two congruence subgroups $\Gamma$ and $L$, we denote by $\mathfrak{m}_{\Gamma \cap L}$ for the fiber product of algebraic stacks $\mathfrak{M}_{\Gamma} \times_{\mathfrak{M}} \mathfrak{M}_L$, where $\mathfrak{M}$ is the stack over $\Z$ whose $S$-points classify generalized elliptic curves over the scheme $S$.
\item For any algebraic stack $\mathfrak{M}_{\Gamma}$ over $\Z$, we denote by $M_{\Gamma}$ the coarse moduli space attached to $\mathfrak{M}_{\Gamma}$ ($M_{\Gamma}$ is an algebraic space). 

\item For any proper and flat scheme $\mathfrak{X}$ over $\mathcal{O}_K$, we denote by $\mathfrak{X}_K^{an}$ the rigid analytic space given by the generic fiber of the completion of $\mathfrak{X}$ along its special fiber (we have a Galois-equivariant isomorphism $\mathfrak{X}_K(\bar{K}) \simeq \mathfrak{X}^{an}_K(\bar{K}))$.

\end{enumerate}

We refer the reader to \cite[IV]{D-R} for more details about the stacks defined above.

Let $\mathfrak{X}$ be the proper flat normal coarse moduli space over $\Spec \mathcal{O}_K$ associated to the algebraic stack $\mathfrak{M}_{\Gamma_0(p) \cap \Gamma(2)}$. Deligne-Rapoport proved in \cite[VI.6.9]{D-R} that the singularities of $\mathfrak{X} $ are ordinary, and that the special fiber of $\mathfrak{X} $ is an union of two copies of $M_{\Gamma(2)}{ \small \otimes k}$ meeting transversally at the supersingular points, and such that a supersingular point $x$ of the first copy is identified with the point $x^p=\Frob_p(x)$ of the second copy (the supersingular points of the special fiber of $\mathfrak{X}$ are $k$-rational).

\begin{theorem}\label{main-thm}\

\begin{enumerate} 
\item The scheme $\mathfrak{X} $ is regular and the irreducible components of its special fiber are isomorphic to $\mathbf{P}^1_k$. 
\item Let $S:=\{e_i\}$ ($i \in \{0,...,g\}$ where $g = \frac{p-3}{2}$ is the genus of $\mathfrak{X}$) be the set of supersingular points of $\mathfrak{X}_{k}$ and let $\Gamma \subset \PGL_2(K)$ be a Schottky group such that there exists an isomorphism of rigid spaces $\mathfrak{H}_{\Gamma}/\Gamma \simeq \mathfrak{X}_K^{an}$, where $\mathfrak{H}_{\Gamma}=\mathbf{P}^1_K - \mathcal{L}$ and $\mathcal{L}$ are the limits points of $\Gamma$. Then the Drinfeld pairing $\Phi:\Gamma^{ab} \times \Gamma^{ab} \rightarrow K^{\times}$  takes values in $\Q_p^{\times}$.

\item Let $\bar{\Phi}$ be the residual pairing modulo the principal units $U_1(\Q_p^{\times})$ of $\Q_p^{\times}$, then, after the identification $\Gamma^{ab} \simeq H_1(\Delta, \mathbf{Z}) \simeq \Z[S]^0$ (the augmentation subgroup of $\mathbf{Z}[S]$), $\bar{\Phi}$ extends to a pairing (still denoted $\bar{\Phi}$) $\Z[S]\times \Z[S] \rightarrow K^{\times}/U_1(K)$ such that:

{\small 
 $$ \bar{\Phi}(e_i,e_j) = \left\{ 
 \begin{array}{cl}
 (\lambda(e_i)-\lambda(e_j))^{p+1} & \mbox{ if  }  i \ne j \text{;} \\
 p \cdot \prod_{k \ne i} (\lambda(e_i)-\lambda(e_k))^{-(p+1)}   \mbox{ if  } \text{ i=j} .   
 \end{array}
 \right. $$ }

\end{enumerate}
\end{theorem}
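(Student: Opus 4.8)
The plan is to follow de Shalit's strategy for $X_0(p)$, adapting each ingredient to the $\Gamma_0(p)\cap\Gamma(2)$-setting where the relevant modular function becomes $\lambda$ rather than $j$. First I would establish part (i): regularity of $\mathfrak{X}$ is a local computation at the singular (supersingular) points, using the Deligne-Rapoport description of the completed local ring as a product of two smooth branches meeting transversally together with the fact that $\Gamma(2)$ rigidifies the moduli problem so that $M_{\Gamma(2)}$ is a fine moduli scheme; thus the coarse space equals the fine space and is smooth away from the crossings, and the crossings are ordinary double points, which after the canonical blow-up/normalization already present in the definition of $\mathfrak{X}$ are regular. Since $M_{\Gamma(2)}\otimes k\cong \mathbf{P}^1_k$ (the $\lambda$-line), the components of the special fiber are $\mathbf{P}^1_k$'s, and the dual graph $\Delta$ has two vertices joined by $g+1$ edges indexed by $S$, so $H_1(\Delta,\mathbf{Z})\cong\mathbf{Z}[S]^0$ and $\Gamma\cong\pi_1(\Delta)$ is free of rank $g$, matching the genus.

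Next, for parts (ii) and (iii), the key is to produce explicit rational functions (theta-like elements, or rather their reductions) realizing the pairing. Following de Shalit, the Drinfeld/Manin pairing $\Phi(e_i,e_j)$ is computed as the ``period'' obtained by evaluating a rigid-analytic theta function associated to the divisor $(e_i)-(e_*)$ at $e_j$, and its reduction modulo principal units is controlled by the intersection theory on a semistable model together with the values of modular units on the supersingular locus. Concretely, I would use that on the special fiber each component is the $\lambda$-line over $k$ or its Frobenius twist, with the two copies glued at $x\leftrightarrow x^p$; the function $\lambda$ on one component and $\lambda^p$ on the other are compatible up to the gluing, and the modular unit $\lambda(e_i)-\lambda(e_j)$ (a unit on the ordinary locus) has divisor supported on cusps. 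The exponent $p+1$ then arises exactly as in de Shalit: it is the local contribution $1+p$ coming from the two components (degree $1$ on one, degree $p$ on the other via the Frobenius identification) meeting at each supersingular point — this is the analogue of the factor that in the $j$-line case produces the valuation statement of Oesterl\'e's conjecture.

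More precisely, I would: (1) identify $\Gamma^{ab}$ with $H_1(\Delta,\mathbf{Z})=\mathbf{Z}[S]^0$ and recall the de Shalit formula expressing $\bar\Phi(e_i,e_j)$, for $i\ne j$, as the reduction of a ratio of values of a Drinfeld theta function, which by Gerritzen-van der Put equals a product over the combinatorial paths in $\Delta$ of cross-ratios of supersingular points in $\mathbf{P}^1_K$; (2) specialize these cross-ratios: a supersingular point $e_k$ lifts canonically (via the Gross-Hopkins / canonical-subgroup picture, or simply via the $\lambda$-coordinate of its component) and the cross-ratio of $e_i,e_j$ reduces to $\lambda(e_i)-\lambda(e_j)$ up to principal units; (3) bookkeep the Frobenius gluing $x\leftrightarrow x^p$, which doubles-and-Frobenius-twists the contribution and yields the exponent $p+1=1+p$; (4) for the diagonal case, use the standard relation $\bar\Phi(e_i,e_i)=-\sum_{j\ne i}\bar\Phi(e_i,e_j)$ in the abelianization together with the ``sum over all supersingular points'' trivialization — the total product $\prod_{j}\prod_{k\ne j}(\lambda(e_j)-\lambda(e_k))^{p+1}$ is, up to principal units, a power of $p$ (this is a discriminant/Hasse-invariant identity on the supersingular locus), which forces the factor $p$ in $\bar\Phi(e_i,e_i)$; and (5) deduce that $\bar\Phi$, a priori valued in $K^\times/U_1(K)$, in fact lands in $\mathbf{Q}_p^\times/U_1(\mathbf{Q}_p)$ because the right-hand side does (each $\lambda(e_i)-\lambda(e_j)$ has $\mathbf{F}_p$-rational reduction since $\lambda(e_i)^p$ and $\lambda(e_i)$ both appear symmetrically via the gluing), which also settles (ii) once one checks the valuations are integers.

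The main obstacle I expect is step (3)–(4): correctly tracking the Frobenius identification $x\leftrightarrow x^p$ of the two $\lambda$-lines through the theta-function computation, so as to pin down the exponent as exactly $p+1$ (and not, say, $2$ or $p$), and then proving the discriminant identity $\prod_{j\ne k}(\lambda(e_j)-\lambda(e_k))\equiv (\text{unit})\cdot p^{?}$ on the supersingular locus with the correct power of $p$ — this is the $\lambda$-analogue of de Shalit's computation with the $j$-invariant and the supersingular polynomial, and requires the precise relation between the supersingular values of $\lambda$, the Hasse invariant, and the different of the map to the $j$-line. A secondary technical point is ensuring the canonical lifts of supersingular points used to compute cross-ratios are the ``right'' ones (compatible with the two components and their Frobenius gluing), so that no spurious principal units are introduced; here I would lean on the Deligne-Rapoport local model and de Shalit's normalization of theta functions.
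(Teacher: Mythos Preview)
Your outline tracks de Shalit's strategy, but two substantive gaps separate it from a proof.

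First, a minor correction for part (i): $\Gamma(2)$ does \emph{not} rigidify the moduli problem (the automorphism $-1$ survives), so $M_{\Gamma(2)}$ is not a fine moduli scheme. What actually makes $\mathfrak{X}$ regular is that, in the Deligne--Rapoport local model $W(\bar k)[[X,Y]]/(XY-p^n)$ at a supersingular point, the exponent $n$ is the order of $\mathrm{Aut}(E,\alpha_2)/\{\pm 1\}$; any automorphism of $E$ other than $\pm 1$ moves a basis of $E[2]$, so $n=1$ and no blow-up is needed.

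Second, and more seriously, your treatment of the diagonal case is where the proof would fail. The statement asserts that $\bar\Phi$ \emph{extends} to $\mathbf{Z}[S]\times\mathbf{Z}[S]$ with the displayed values; this extension is not canonical and must be constructed. The paper does this by choosing two $\mathbf{Q}_p$-rational cusps $\xi_c$, $\xi_{c'}=w_p(\xi_c)$ on the two components, lifting them to $\mathfrak{H}_\Gamma$ across each annulus, and defining $\Phi(e_i,e_j)$ as a regularized limit involving $\lambda'=\lambda\circ w_p$. Your ``standard relation'' $\bar\Phi(e_i,e_i)=-\sum_{j\ne i}\bar\Phi(e_i,e_j)$ is not available (the pairing is multiplicative, and on $\mathbf{Z}[S]^0$ there is no $e_i$ alone), and a discriminant identity on supersingular $\lambda$-values cannot by itself pin down $\bar\Phi(e_i,e_i)$, since any extension can be twisted by a character on $\mathbf{Z}[S]/\mathbf{Z}[S]^0$. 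The paper's device is to compute $\bar\Phi(e_i,\hat e)$ directly: one builds an explicit modular unit $\mu$ on $X$ (an eta-quotient pulled back via $z\mapsto z$ and $z\mapsto (z+1)/2$) whose divisor is $6\tfrac{p-1}{d}\bigl((\xi_{c'})-(\xi_c)\bigr)$, proves it is proportional to the theta-product $\nu=\prod_\ell \theta(\xi_{c'}^{(\ell)},z_0,\cdot)^{12/d}$ by matching degrees on every annulus, and then reads off $\bar\Phi(e_i,\hat e)^{12/d}=p^{12/d}$ from the $q$-expansion of $\mu$ and its functional equation $\mu\circ w_p = p^{-12/d}\mu^{-1}$. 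This is the step your sketch is missing; the ``power of $p$'' is not a Hasse-invariant discriminant computation but comes from the $p^{-12/d}$ in the functional equation of the eta-quotient.

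Finally, your proposed route to (ii) is circular: you infer $\Phi\in\mathbf{Q}_p^\times$ from the shape of the formula in (iii), but the formula is only established modulo principal units of $K$. The paper proves (ii) independently and first, using that $w_p$ acts as Frobenius on $S$ (exactly de Shalit's Lemma~0.4), and only then reduces (iii) to a computation in $\mathbf{Q}_p^\times/U_1(\mathbf{Q}_p)$.
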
 
\begin{rem}
One can also prove an analogue of the theorem above when $N=3$ and $p \equiv 1 \text{ (mod }3\text{)}$, for a suitable model $\mathfrak{X}$ over $\Z_p$ of the modular curve of level $\Gamma_0(p) \cap \Gamma(3)$ (\textit{c.f.} \ref{N=3} in appendix).
\end{rem}

\textit{Acknoledgements.}
The first author has received funding from the European Research Council (ERC) under the European Union's Horizon 2020 research and innovation programme (grant agreement No 682152).
The second author has received funding from Universit\'e Paris $7$ for his Phd thesis. He would like to thanks his Phd advisor Lo\"ic Merel for drawing his attention to this problem. Both authors are grateful to E. de Shalit for a useful conversation about the reduction map. Finally, the authors would like to thank the anonymous referee for a careful reading and helpful suggestions.

\section{Coarse moduli spaces of moduli stacks of generalized elliptic curves with level structure}
Let $p \geq 5$ be a prime number and $N \geq 2$ be a squarefree integer prime to $p$. Let $\mathfrak{M}_{\Gamma_0(p) \cap \Gamma(N)}$ be the stack over $\Z[1/N]$ whose $S$-points are the isomorphism classes of generalized elliptic curves $E/S$, endowed with a locally free subgroup $A$ of rank $p$ such that $A+E[N]$ meets each irreducible component of any geometric fiber of $E$ ($E[N]$ is the subgroup of $N$-torsion points of $E$) and a basis of the $N$-torsion (\textit{i.e.} an isomorphism $\alpha_N: E[N] \simeq (\Z/N\Z)^2$). Deligne and Rapoport proved in \cite{D-R} that $\mathfrak{M}_{\Gamma_0(p) \cap \Gamma(N)}$ is a regular algebraic stack, proper, of pure dimension $2$ and flat. 

Let $M_{\Gamma_0(p) \cap \Gamma(N)}' $ be the coarse space of the algebraic stack $\mathfrak{M}_{\Gamma_0(p) \cap \Gamma(N)}$ over $\Z[1/N]$. Since $M_{\Gamma_0(p) \cap \Gamma(N)}' $ is smooth and proper over $\Z[1/Np]$, $M_{\Gamma_0(p) \cap \Gamma(N)}'[1/Np]$ is a scheme of relative dimension one over $\Z[1/Np]$ ($M_{\Gamma_0(p) \cap \Gamma(N)}'[1/Np]$ is a quasi-projective scheme). Let $M_{\Gamma_0(p) \cap \Gamma(N)}$ be the model over $\Z[1/N]$ of $M_{\Gamma_0(p) \cap \Gamma(N)}'[1/Np]$ given by the normalization along the modular invariant $j: M_{\Gamma_0(p) \cap \Gamma(N)}'[1/Np] \rightarrow \mathbf{P}^1_{\Z[1/N]}$ (see \cite[IV.3.3]{D-R}). By using the same arguments as in Theorems \cite[V.1.6]{D-R}, \cite[IV.3.4]{D-R}, and Proposition \cite[IV.3.10]{D-R}, we deduce that $M_{\Gamma_0(p) \cap \Gamma(N)}$ is the coarse moduli space of the algebraic stack $\mathfrak{M}_{\Gamma_0(p) \cap \Gamma(N)}$ (\textit{c.f.} variante \cite[V.1.14]{D-R}).

The results of \cite{D-R} show that the scheme $M_{\Gamma_0(p) \cap \Gamma(N)}$ is smooth over $\Spec \Z[1/N]$ outside the points associated to supersingular elliptic curves in characteristic $p$.

The cusps of $M_{\Gamma_0(p) \cap \Gamma(N)}$ correspond to $N$-gons or $Np$-gons and are given by sections $\Spec \Z[1/N, \zeta_N] \rightarrow \mathfrak{M}_{\Gamma_0(p) \cap \Gamma(N)}$ composed with the coarse moduli map $\mathfrak{M}_{\Gamma_0(p) \cap \Gamma(N)} \rightarrow M_{\Gamma_0(p) \cap \Gamma(N)}$.

Meanwhile, the work \cite{K-M} is a reference about moduli of elliptic curves, and much of the material of \cite{D-R} is best dealt in \cite{K-M}.

\begin{rem}
The genus of the complex modular curve $\Gamma(N)\backslash \mathcal{H}\cup \mathbf{P}^{1}(\Q)  $ is $0$ if and only if $N \in \{1,2,3,4,5\}$.
\end{rem}

In order to apply Mumford's uniformization theorem, we need the following result.

\begin{prop}\label{semi-stable}\

\begin{enumerate}
\item The special fiber of $M_{ \Gamma(2)} \otimes \mathcal{O}_K$ is isomorphic to $\mathbf{P}^1_k$.
\item The scheme $\mathfrak{X}$ is regular.
\item The lambda modular invariant induces an isomorphism $$\lambda:M_{ \Gamma(2)} \otimes {\mathbf{Q}} \simeq \mathbf{P}^{1}_\Q.$$

\end{enumerate}
\end{prop}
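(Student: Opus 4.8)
The plan is to treat the three assertions together, using the fact that $M_{\Gamma(2)}$ is the universal elliptic curve's base with full level-$2$ structure, where the Legendre parameter $\lambda$ gives a coordinate. First I would establish (iii): over $\Z[1/2]$ the stack $\mathfrak{M}_{\Gamma(2)}$ is well-known to be represented (in the fine sense, since $\Gamma(2)$ is free of $-I$ in $\PGL_2$) by the Legendre family $y^2 = x(x-1)(x-\lambda)$, whose base is $\Spec \Z[1/2][\lambda, \lambda^{-1}, (\lambda-1)^{-1}]$; adjoining the three cusps $\lambda \in \{0,1,\infty\}$ (which correspond to the three $2$-gons with their level structures, by Deligne--Rapoport \cite[III,IV]{D-R}, or \cite{K-M}) compactifies this to $\mathbf{P}^1_{\Z[1/2]}$. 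Over $\Q$ this gives the stated isomorphism $\lambda : M_{\Gamma(2)} \otimes \Q \xrightarrow{\sim} \mathbf{P}^1_\Q$; the point is simply that the coarse space equals the fine moduli space here and that $j = 2^8 (\lambda^2 - \lambda + 1)^3 / (\lambda^2(\lambda-1)^2)$ exhibits $M_{\Gamma(2)}$ as the normalization constructed via $j$. Since $p \geq 5$, reduction mod $p$ is harmless: $\mathbf{P}^1_{\Z[1/2]} \otimes \mathcal{O}_K$ has special fiber $\mathbf{P}^1_k$, which is assertion (i).

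For (ii), I would argue that $\mathfrak{X}$, being the coarse space of $\mathfrak{M}_{\Gamma_0(p) \cap \Gamma(2)}$ over $\mathcal{O}_K$, is étale-locally on the special fiber either smooth (away from supersingular points, by the remark quoted above on smoothness of $M_{\Gamma_0(p)\cap\Gamma(N)}$ over $\Z[1/N]$ outside supersingular points) or else, at a supersingular point $x$, isomorphic to the local model appearing in Deligne--Rapoport \cite[VI.6.9]{D-R}: two smooth branches (the two copies of $M_{\Gamma(2)}\otimes k$) crossing transversally, i.e. étale-locally $\mathcal{O}_K[[u,v]]/(uv - p)$ or $\mathcal{O}_K[[u,v]]/(uv-\pi)$ for a uniformizer $\pi$. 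The key input is that the crossing is a true ordinary double point with the \emph{minimal} possible thickness — the width-one statement — which reduces to knowing the ramification of $M_{\Gamma_0(p)}$ over $M_{\Gamma(1)} = \mathbf{P}^1_j$ at supersingular $j$, combined with the fact that adding the (prime-to-$p$) $\Gamma(2)$-structure does not change the local picture at $p$ except to replace $\mathbf{P}^1_j$ by its cover $\mathbf{P}^1_\lambda$, which is étale at supersingular points since $j \neq 0, 1728$ there for $p \geq 5$. Regularity of $\mathcal{O}_K[[u,v]]/(uv-p)$ is then immediate.

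The main obstacle is the width-one claim at the supersingular points, i.e. showing the local equation is exactly $uv = p$ (or $uv = $ a uniformizer) rather than $uv = p^n$ with $n > 1$; equivalently, that $\mathfrak{X}$ is regular there and not merely normal with an $A_{n-1}$-singularity. This is precisely where one must use that the $\Gamma_0(p)$-structure at a supersingular point contributes ramification index $p+1$ over $\Z_p$ split as $1 + p$ between the two branches (this is the content of Deligne--Rapoport's analysis of the Néron $p$-gon and the supersingular locus, \cite[V, VI.6]{D-R}, equivalently the Katz--Mazur crossings theorem \cite{K-M}), so that the two branches meet with the correct transversality to make the total space regular. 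I would spell this out by choosing a supersingular point, trivializing the formal deformation space of the underlying supersingular elliptic curve (a single variable $t$ over $W(k) = \mathcal{O}_K$ after rigidifying with the $\Gamma(2)$-structure, which is étale there), writing the two branches of $\mathfrak{M}_{\Gamma_0(p)}$ as $\Spf \mathcal{O}_K[[X]]$ and $\Spf \mathcal{O}_K[[Y]]$ with $t = X^a$, $t = Y^b$, $a + b = p+1$ and in fact $\{a,b\} = \{1,p\}$ on the two branches — wait, more precisely the local ring of $\mathfrak{X}$ at $x$ is $\{(f,g) : f(0) = g(0)\} \subset \mathcal{O}_K[[X]] \times \mathcal{O}_K[[Y]]$ with the gluing dictated by $X^p \equiv (\text{unit}) \cdot p \cdot Y^{-1} \bmod{\ldots}$ — and then checking that this ring is $\mathcal{O}_K[[X,Y]]/(XY - p\cdot u)$ for a unit $u$, hence regular. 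The other two parts are then routine given the standard Legendre-family description of $\mathfrak{M}_{\Gamma(2)}$.
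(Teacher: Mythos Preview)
Your approach to (i) and (iii) via the Legendre family is essentially the paper's, though note that $\mathfrak{M}_{\Gamma(2)}$ is \emph{not} a fine moduli space: $-I \in \Gamma(2)$ (since $-1 \equiv 1 \bmod 2$), so every object has the automorphism $[-1]$. The paper handles this by working with the coarse space and using that $\mathfrak{M}_{\Gamma(2)}(K) \simeq M_{\Gamma(2)}(K)$ for fields $K$; your argument goes through with this correction.

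Part (ii), however, has a real gap. Your key claim---that the cover $\mathbf{P}^1_\lambda \to \mathbf{P}^1_j$ is \'etale at supersingular points ``since $j \neq 0, 1728$ there for $p \geq 5$''---is false. For $p \equiv 3 \pmod 4$ the curve with $j=1728$ is supersingular, and for $p \equiv 2 \pmod 3$ the curve with $j=0$ is supersingular; e.g.\ $p=5,7,11$ already give counterexamples. At those $j$-values the map $\lambda \mapsto j$ is ramified, so your \'etaleness argument breaks down precisely at the points that matter. The subsequent local-model computation you sketch (the $X^a, Y^b$ business) does not recover from this, and in any case remains a sketch rather than a proof.

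The paper's argument for (ii) is much cleaner and bypasses this difficulty entirely. Citing \cite[VI.6.9]{D-R}, the completed local ring of $\mathfrak{X}$ at a supersingular point $x$ is $W(\bar{k})[[X,Y]]/(XY-p^n)$, where $n = |\operatorname{Aut}(E,\alpha_2)/\{\pm 1\}|$. The point is then simply that any automorphism of $E$ other than $\pm 1$ acts \emph{nontrivially} on the $2$-torsion basis $\alpha_2$ (for $p \geq 5$ the extra automorphisms, when they exist, are of order $3$, $4$, or $6$ and permute the nonzero $2$-torsion points), so $\operatorname{Aut}(E,\alpha_2) = \{\pm 1\}$ and $n=1$. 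This is exactly the statement that the $\Gamma(2)$-structure rigidifies away the extra automorphisms at $j=0,1728$---the correct replacement for your erroneous \'etaleness claim.
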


\begin{proof}\
 
i) The scheme $M_{\Gamma(2)}$ is smooth, proper and of relative dimension one over $\Spec \Z[1/2]$, so it is flat and the genus of the fibers is constant (see \cite[7.9]{EGA}). Corollaries \cite[IV.5.5]{D-R} and \cite[IV.5.6]{D-R} show that the geometric fibers of $M_{\Gamma(2)}$ over $\Spec \Z[1/2]$ are connected and smooth, hence irreducible. Thus, the fact that the coarse moduli space formation commutes with flat base change implies that $$M_{\Gamma(2)} {\small \otimes \C} \simeq  \Gamma(2) \backslash (\mathcal{H}\cup \mathbf{P}^{1}(\Q)).$$ 

Hence the genus of the special fiber of $M_{\Gamma(2)}$ at $p$ is $0$. On the other hand, the cusps of $M_{\Gamma(2)}$ correspond to $2$-polygons or $2p$-polygons and are given by sections $\Spec \Z[1/2] \rightarrow \mathfrak{M}_{\Gamma(2)}$, then $M_{\Gamma(2)}{ \small \otimes k}$ has a $k$-rational point. Thus, the curve $M_{\Gamma(2)}{ \small \otimes k}$ is isomorphic to the projective line $\mathbf{P}^1_{k}$.

ii) Note that a local noetherian ring is regular if and only if its strict henselianization is regular. Let $x:\Spec(k(x)) \rightarrow \mathfrak{X}$ be a singular geometric point (\textit{i.e.} it corresponds to a supersingular elliptic curve). Thanks to Theorem \cite[VI.6.9]{D-R}, the completion of the strict henselianization of the local ring of $\mathfrak{X}$ at $x$ is isomorphic to ${ \scriptsize W(\bar{k})[[X,Y]]/(XY-p^{n})}$, where $n$ is the cardinality of the automorphism group$\mod \{-1,1\}$ of the pair $(E,\alpha_2)$, $E$ and $\alpha_2$ are respectively a supersingular elliptic curve and a basis of the $2$-torsion of $E$ associated to the geometric points $x$. But any automorphism of $E$ different from $\{-1,1\}$ acts non trivially on $\alpha_2$, so $n=1$.

iii) Let $E_{\lambda}$ be the elliptic curve over $\mathbf{A}^{1}_\Q$ given by the equation $$Y^2=X(X-1)(X-\lambda) \text{ where } \lambda \in \Q[T] \text{ .}$$ 
It is clear that $E_{\lambda}$ induces a natural morphism $\mathbf{A}^{1}_\Q \rightarrow \mathfrak{M}_{\Gamma(2)} \otimes {\Q} $ and after composing with the coarse moduli map, we get a morphism $g: \mathbf{A}^{1}_\Q \rightarrow M_{\Gamma(2)}{ \small \otimes{\Q}} $ which extends to a morphism $\tilde{g}: \mathbf{P}^{1}_\Q \rightarrow M_{\Gamma(2)}{ \small \otimes{\Q}} $. 

Moreover, the universal properties of the coarse moduli space of $M_{\Gamma(2)}$ imply that $\mathfrak{M}_{ \Gamma(2)}(K) \simeq M_{\Gamma(2)}(K)$ for any field $K$. Hence, $\tilde{g}$ is an isomorphism and $\lambda$ is the inverse of $\tilde{g}$.
\end{proof}

\begin{rem}\label{champ N=3}
When $N = 3$, the algebraic stack $\mathfrak{M}_{\Gamma_0(p) \cap \Gamma(N)}$ is rigid. Hence it will be represented by a scheme of relative dimension $1$ over $\Z[1/N]$, which is smooth over $\Z[1/Np]$.

The geometric fibers of the morphism $M_{\Gamma(3)}[1/3] \rightarrow \Spec \Z[1/3,\zeta_3]$ have genus $0$ and are smooth and irreducible. However, they are not geometrically irreducible over $\Spec \Z[1/3]$ (see \cite{D-R}). On the other hand, Deligne and Rapoport proved in \cite[VI.6.8]{D-R} that for any subgroup $K \subset \SL_2(\Z/3\Z)$, the geometric fibers of the morphism $M_K[1/3] \rightarrow \Spec \Z[1/3,\zeta_3]^K$ are of genus $0$, irreducible and smooth. Thus, if we choose $K$ such that $\Z[1/3,\zeta_3]^K=\Z[1/3]$ and $M_K[1/3](\C)=\Gamma(3)\backslash (\mathcal{H}\cup \mathbf{P}^{1}(\Q))$, then Theorem \cite[VI.6.9]{D-R} and the same arguments as in the above propositions show that $M_{K \cap \Gamma_0(p)}\otimes \mathcal{O}_K$ is regular, semi-stable and the irreducible components of its special fiber are $k$-rational.
\end{rem}

\bibliographystyle{siam}

\section{$p$-adic uniformization of $\mathfrak{X}_K^{an}$ and the reduction map}

Mumford's Theorem \cite{M} shows the existence of a free discrete subgroup $\Gamma \subset \PGL_2(K)$ (\textit{i.e.} a Schottky group) and of a $\Gal(\bar{K}/K)$-equivariant morphism of rigid spaces:$$\tau: \mathfrak{H}_{\Gamma} \rightarrow \mathfrak{X}_K^{an}$$ inducing an isomorphism $\mathfrak{X}_K^{an} \simeq \mathfrak{H}_{\Gamma}/\Gamma$, where $\mathfrak{H}_{\Gamma}=\mathbf{P}^1_K - \mathcal{L}$ and $\mathcal{L}$ is the set of limit points of $\Gamma$. Note that $\mathfrak{H}_{\Gamma}$ is an admissible open of the rigid projective line $\mathbf{P}^1_K$. 

Let $\mathcal{T}_{\Gamma}$ be the subtree of the Bruhat--Tits tree for $\PGL_2(K)$ generated by the axes whose ends correspond to the limit points of $\Gamma$. 
Mumford constructed in \cite{M} a continuous map $\rho: \mathfrak{H}_{\Gamma} \rightarrow \mathcal{T}_{\Gamma}$ called the reduction map, satisfying the following properties:

\begin{prop}\label{red1}\

\begin{enumerate}

\item The inverse image of any vertex $v$ of $\mathcal{T}_{\Gamma}$ is isomorphic to the closed unit disk minus the union of some open maximal rational sub-disks.

\item The inverse image of any edge is an open annulus of width $r \in |K^{\times}|_p$ \textit{ i.e.}
$$ D(0, r[ - D(0,1] \text{ .}$$

\item If we endow the graph $\mathcal{T}_{\Gamma}$ with the natural topology in which we identify an edge to the interval $[0,1]$, then $\rho$ is continuous.

\item If $\Delta$ is the dual graph of the special fiber of $\mathfrak{X}$, then $\Delta \simeq \mathcal{T}_{\Gamma}/\Gamma$ and $\rH_1(\Delta,\Z) \simeq \Gamma^{ab}\simeq \Z[S]^0$, where $\Z[S]^0$ is the augmentation subgroup of $\Z[S]$.

\end{enumerate}

\end{prop}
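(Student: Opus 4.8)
The plan is to derive items (i)--(iii) from Mumford's general construction of the reduction map in \cite{M}, and to read off (iv) from that construction combined with the explicit description of $\mathfrak{X}_k$ recalled above and Proposition \ref{semi-stable}.

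Items (i), (ii) and (iii) hold for the reduction map attached to \emph{any} Schottky uniformization, and are established in \cite{M}: for a vertex $v$ of $\cT_\Gamma$ the fibre $\rho^{-1}(v)$ is a connected affinoid obtained from $\mathbf{P}^1_K$ by removing finitely many disjoint open disks, one for each edge of $\cT_\Gamma$ at $v$, and sending one of the removed residue classes to $\infty$ by an affine change of coordinate exhibits $\rho^{-1}(v)$ as the closed unit disk minus a finite union of maximal open rational sub-disks; the fibre over an open edge is an open annulus, and $\rho$ is continuous once each edge is given the interval topology. What must be made explicit in our situation is the width of these annuli. The annulus sitting over an open edge whose image in $\cT_\Gamma/\Gamma = \Delta$ corresponds to a node $x$ of $\mathfrak{X}_k$ has width equal to $p^{n}$, where $n$ is the thickness of $x$, i.e. the integer with completed strict henselian local ring $W(\bar k)[[X,Y]]/(XY-p^{n})$. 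By Proposition \ref{semi-stable}(ii) (a consequence of \cite[VI.6.9]{D-R} and of the fact that every automorphism of a supersingular elliptic curve outside $\{\pm 1\}$ acts non-trivially on a basis of the $2$-torsion) one has $n=1$ at every node; hence every such annulus is isomorphic to $D(0,p[ - D(0,1]$, of width $p = |p|_p^{-1}\in |K^\times|_p$. In particular $\mathfrak{X}$ is itself semi-stable with reduced special fibre, so $\cT_\Gamma$ is not subdivided relative to $\Delta$ and Mumford's construction applies to $\mathfrak{X}$ directly.

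For (iv): Mumford's theorem identifies $\cT_\Gamma/\Gamma$ with the dual graph $\Delta$ of the special fibre of the minimal semi-stable model and identifies $\Gamma$ with the free group $\pi_1(\Delta)$, so abelianising gives $\Gamma^{ab}\simeq \pi_1(\Delta)^{ab}\simeq \rH_1(\Delta,\Z)$. By the Deligne--Rapoport description, $\mathfrak{X}_k$ is the union of two copies of $M_{\Gamma(2)}\otimes k\simeq \mathbf{P}^1_k$ (Proposition \ref{semi-stable}(i)) meeting transversally precisely at the $g+1$ supersingular points $S=\{e_0,\dots,e_g\}$, a point $x$ of the first copy being glued to $\Frob_p(x)$ of the second. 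Hence $\Delta$ has exactly two vertices $v_1,v_2$ and $g+1$ edges, one edge $\bar e_i$ joining $v_1$ to $v_2$ for each $e_i\in S$. Orienting each $\bar e_i$ from $v_1$ to $v_2$, the simplicial boundary map $\partial\colon \Z[S]=\bigoplus_i \Z \bar e_i\to \Z v_1\oplus \Z v_2$ is $\sum_i a_i\bar e_i\mapsto (\sum_i a_i)(v_2-v_1)$, so $\rH_1(\Delta,\Z)=\ker\partial$ is precisely the augmentation subgroup $\Z[S]^0$; it is free of $\rank g$, consistent with the genus of $\mathfrak{X}$.

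The only point requiring real care is the compatibility between the abstract Schottky uniformization of \cite{M} and the concrete model $\mathfrak{X}$: one must know that it is $\mathfrak{X}$ itself — and not some non-minimal regular model, nor a model with "twisted" nodes — whose dual graph is $\cT_\Gamma/\Gamma$. This is supplied by Proposition \ref{semi-stable}: regularity of $\mathfrak{X}$ together with $n=1$ at each supersingular point forces $\mathfrak{X}$ to be semi-stable with reduced special fibre, hence to be the minimal regular model, so that Mumford's correspondence between $\Gamma$, $\cT_\Gamma/\Gamma$ and $\Delta$ applies verbatim and the annulus widths are all equal to $p$ as claimed.
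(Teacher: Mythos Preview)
Your proposal is correct and follows the same approach as the paper: the paper does not give a separate proof of this proposition but simply states it as a list of properties of Mumford's reduction map, attributing everything to \cite{M}. Your write-up is in fact more detailed than the paper's treatment---you spell out the simplicial computation giving $\rH_1(\Delta,\Z)\simeq\Z[S]^0$ and you compute the annulus width to be exactly $p$ (which the proposition does not assert; it only claims some $r\in|K^\times|_p$, the sharper value $p$ being recorded later in Lemma~\ref{normalization}(iv))---but the underlying idea is identical.
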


The following proposition follows immediately from the the proof of the main theorem of Mumford in \cite{M}.

\begin{prop}\label{red2}\
Let $v$ be a vertex of $\mathcal{T}_{\Gamma}$ having $k+1$ neighbours denoted by $v_i$ for $1\leq i \leq k+1$. Then $\rho^{-1}(v)$ is isomorphic to the closed unit disk $D(0,1]$ minus $k$ maximal open disks $D(a_i,1[$. Each disk $D(a_i,1[$ contains the inverse image by $\rho$ of a unique neighbour $v_i$ of $v$, \textit{i.e.} $\rho^{-1}(v_i)\subset D(a_i,r_i]\subset D(a_i,1[$ for a unique neighbour $v_i$ of $v$. There also exists a unique neighbour  $v_{i_0}$ of $v$ such that  $\rho^{-1}(v_{i_{0}}) \subset D(0,r_{i_0}]$ where $r_{i_{0}}<1$, and the annuli $D(a_i,1[-D(a_i,r_i]$ for $i\ne i_0$ or $D(0,1[-D(0,r_{i_0}]$ reduce respectively to the edges connecting $v$ to $v_i$ and $v$ to $v_{i_0}$.

\end{prop}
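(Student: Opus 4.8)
The plan is to deduce this directly from Proposition~\ref{red1} together with the combinatorics underlying Mumford's construction of $\rho$ in \cite{M}. Recall that a vertex $v$ of the Bruhat--Tits tree of $\PGL_2(K)$ amounts to a smooth model $\mathbf{P}^1_{\mathcal{O}_K}$ of $\mathbf{P}^1_K$, hence to a reduction map $\mathrm{red}_v\colon \mathbf{P}^1_K\rightarrow \mathbf{P}^1_k$, under which the edges issuing from $v$ in the full tree correspond bijectively to the points of $\mathbf{P}^1(k)$: the edge toward $\bar x$ ``lives inside'' the residue disk $\mathrm{red}_v^{-1}(\bar x)$. Mumford's $\rho$ is pinned down by the property that for every vertex $v$ of $\mathcal{T}_{\Gamma}$ one has $\rho^{-1}(v)=\mathrm{red}_v^{-1}(\mathbf{P}^1_k \setminus \{\bar x_1,\dots,\bar x_{k+1}\})$, the $\bar x_j$ being exactly the directions along which $\mathcal{T}_{\Gamma}$ continues past $v$ (equivalently, those whose residue disk meets $\mathcal{L}$); since $v$ has $k+1$ neighbours there are precisely $k+1$ of them. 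So the first step is to record that $\rho^{-1}(v)$ is $\mathbf{P}^1_K$ with $k+1$ pairwise disjoint maximal open disks deleted.

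The second step is to normalise coordinates. Because the intersection points of the irreducible components of the special fibre of $\mathfrak{X}$ are $k$-rational, each $\bar x_j$ lies in $\mathbf{P}^1(k)$, so there is an element of $\PGL_2(\mathcal{O}_K)$ taking $\bar x_{i_0}$ (for our chosen distinguished neighbour $v_{i_0}$) to $\infty$; after applying it,
$$\rho^{-1}(v)=D(0,1] \setminus \bigcup_{i \neq i_0} D(a_i,1[), \qquad a_i \in \mathcal{O}_K,$$
the deleted disks being the residue disks attached to the remaining directions. This yields the first assertion.

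The third step matches the $k+1$ deleted disks (one of which is $D(\infty,1[)=\mathbf{P}^1_K \setminus D(0,1]$) with the $k+1$ neighbours. Removing $v$ but keeping the half-open edges splits $\mathcal{T}_{\Gamma}$ into $k+1$ branches $\mathcal{B}_j$ with $v_j\in\mathcal{B}_j$. Since $\rho$ is continuous with connected fibres and $\mathcal{B}_j$ is connected, $\rho^{-1}(\mathcal{B}_j)$ is connected; these preimages are pairwise disjoint, and after adjoining the limit points (each of which is an end of $\mathcal{T}_\Gamma$ lying in a single branch) their union is $\mathbf{P}^1_K \setminus \rho^{-1}(v)$. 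The latter has exactly $k+1$ connected components, namely the $k+1$ deleted disks, so the branches correspond one-to-one with these disks; in particular $\rho^{-1}(v_j)$ lies in a unique one of them, which gives the uniqueness statement. Finally, for $j\neq i_0$, Proposition~\ref{red1}(ii) says $\rho^{-1}(e_j)$ is an open annulus; it is contained in $D(a_j,1[)$ and abuts $\rho^{-1}(v)$, i.e.\ it accumulates at the circle $|z-a_j|=1$, so it must have the form $D(a_j,1[) \setminus D(a_j,r_j]$ with $r_j<1$ in $|K^{\times}|$, forcing $\rho^{-1}(v_j) \subset D(a_j,r_j] \subset D(a_j,1[)$. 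The distinguished direction $i_0$ is handled identically after passing to $w=1/z$, in which $D(\infty,1[)=\{|w|<1\}$: one obtains $\rho^{-1}(e_{i_0})=D(0,1[) \setminus D(0,r_{i_0}]$ and $\rho^{-1}(v_{i_0}) \subset D(0,r_{i_0}]$ with $r_{i_0}<1$, and the displayed annuli are then precisely the edge-fibres.

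The only ingredient that is not pure bookkeeping is the ``collar'' claim in the last step: that $\rho^{-1}(e_j)$ sits against $\rho^{-1}(v)$ at the \emph{boundary} of the residue disk $D(a_j,1[)$, rather than somewhere in its interior. This is exactly the local mechanism of Mumford's reduction map---moving along an edge away from $v$ corresponds to descending through strictly smaller sub-disks of the residue disk---so I would invoke \cite{M} for it rather than reprove it; everything else is routine manipulation of Proposition~\ref{red1}.
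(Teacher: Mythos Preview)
The paper gives no proof of this proposition at all: immediately before the statement it simply says ``The following proposition follows immediately from the proof of the main theorem of Mumford in \cite{M}.'' So there is nothing to compare against; your write-up is a correct and considerably more detailed unpacking of exactly that citation. Your three steps---identifying $\rho^{-1}(v)$ as $\mathbf{P}^1_K$ with the $k+1$ residue disks in the directions of $\mathcal{T}_\Gamma$ removed, normalising one direction to $\infty$, and matching branches with disks via connectedness of the $\rho$-preimages---are the standard way to extract this from Mumford's construction, and your honest flag that the ``collar'' claim (the edge-annulus abuts the boundary circle of its residue disk) is the one genuinely non-formal input, to be taken from \cite{M}, is appropriate.

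One small remark on the reading of the statement: as written, the clause about $v_{i_0}$ places $\rho^{-1}(v_{i_0})$ inside $D(0,r_{i_0}]$ in what looks like the same coordinate as $\rho^{-1}(v)\subset D(0,1]$, which taken literally would make the two preimages overlap. Your interpretation---that this description is meant after the inversion $w=1/z$ sending the ``missing'' direction at $\infty$ to $0$---is the correct way to make sense of it, and is consistent with how the authors use the proposition later (e.g.\ in the discussion of $B_i$, $C_i'$ and the fundamental domain $D$). You might say this explicitly if you keep your version.
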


The special fiber of $\mathfrak{X}$ has two components, and each component has $3$ cusps. 
One of these components, which we call the \textit{\'etale} component, classifies elliptic curves or $2p$-sided N\'eron polygons over $\bar{k}$ with an \'etale subgroup of  order $p$ and a basis of the $2$-torsion. The other component, which we call the \textit{multiplicative} component, classifies elliptic curves or $2$-sided N\'eron polygons over $\bar{k}$ with a multiplicative subgroup of order $p$ and a basis of the $2$-torsion.

The involution $w_p$ sends a $2p$-gon to a $2$-gon, since the quotient of a $2p$-gon by its unique cyclic \'etale subgroup of order $p$ (\textit{i.e.} $\Z/p\Z$) in its smooth locus gives $2$-sided polygone ($\Z/p\Z$ acts by rotations).  

Let $c$ and $c'=w_p(c)$ be two cusps of $M_{\Gamma_0(p)\cap \Gamma(2)}(\C)$ defined in Appendix, section \ref{section_cusps_N=2}. They are $\Q$-rational and $M_{\Gamma_0(p)\cap \Gamma(2)}$ is proper over $\Z[1/2]$. The valuative criterion shows that there exists two cusps $\xi_c :\Spec \Z_p \rightarrow \mathfrak{X}$ (resp. $\xi_{c'}: \Spec \Z_p \rightarrow \mathfrak{X}$) corresponding to the cusps $c$ and $c'$ of $M_{\Gamma_0(p)\cap \Gamma(2)}(\C)$ after taking the generic fiber. The cusp $\xi_{c}$ corresponds to a $2p$-gon and $\xi_{c'}=w_p(\xi_c)$ corresponds to a $2$-gon. 

The dual graph $\Delta$ of the special fiber of $\mathfrak{X}$ has two vertices $v_{c'}$ and $v_{c}$ indexed respectively by the cusps $\xi_{c'}$ and $\xi_c$. There are $g+1$ edges $e_i$ ($i \in \{0,...,g\}$) corresponding to supersingular elliptic curves with a $\Gamma(2)$-structure. We orient these edges so that they point out of $v_{c'}$.

The Atkin-Lehner involution $w_p$ exchanges the two vertices $v_{c'}$ and $v_{c}$ and also acts on edges (reversing the orientation). More precisely, if $E_i$ is a supersingular elliptic curve corresponding to $e_i$, then $w_p(e_i)=e_j$ where $e_j$ is the elliptic curve associated to $E_i^{(p)}=w_p(E_i)$ (here $w_p$ is the Frobenius). Thanks to Lemma \ref{normalization} below, one can identify the generators $\{\gamma_i\}_{1\leq i\leq g+1}$ of $\Gamma$ with $(e_i-e_0)_{1\leq i \leq g+1}$. 

Let $\tilde{v}_{c}$ and $\tilde{v}_{c'}$ be two neighbour vertices of $\mathcal{T}_{\Gamma}$ reducing to $v_{c'}$ and $v_{c}$ respectively, such that the edge linking $\tilde{v}_{c}$ to $\tilde{v}_{c'}$ reduces to $e_0$ modulo $\Gamma$. For $0 \leq i \leq g$, let $\tilde{e}_i'$ be an edge pointing out of $\tilde{v}_{c'}$ and reducing to $e_i$ modulo $\Gamma$. Let $\tilde{e}_i$ be oriented edges of $\mathcal{T}_{\Gamma}$ lifting $e_i$ and pointing to $\tilde{v}_c$. Note that $\tilde{e}_0 = \tilde{e}_0'$.

 Let $A=\rho^{-1}(\tilde{v}_{c})$ and $A'=\rho^{-1}(\tilde{v}_{c'})$.  Proposition \ref{red2} implies that $A$ (resp. $A'$) is the complement of $g+1$ open disks in $\mathbf{P}^1_K$, hence $\mathbf{P}^1_K - A= \underset{0\leq i \leq g}{\coprod} B_i$ and $\mathbf{P}^1_K - A'=\underset{0\leq i \leq g}{\coprod} C'_i$ where $0 \leq i \leq g$. We index $B_i$ and $C'_i$ such that $A \subset C'_0$, $A' \subset B_0$, $B_i$ and $C'_i$ are associated to $\tilde{e}_i'$ and the inverse of $\tilde{e}_i$ respectively.

For all $0\leq i \leq g$, $\rho^{-1}(\tilde{e}_i')=c_i$ is an annulus of $C'_i$ and $C_i=C'_i-\rho^{-1}(\tilde{e}_i')$ is a closed disk; we also have $\mathbf{P}^1_K-C_0=B_0$. We have $$\mathbf{P}^1_K - \rho^{-1}(\underset{0 \leq i \leq g}{\cup} \tilde{e_i}' \cup \tilde{v}_{c'})=\underset{0\leq i \leq g}{\coprod} C_i \text{ .}$$

Note that $\tilde{v}_{c} \cup \tilde{v}_{c'} \cup_i \{\tilde{e}_i'\}$ is a fundamental domain of $\mathcal{T}_{\Gamma}$, so $$D=\mathbf{P}^1_K-\underset{1\leq i \leq g}{\coprod} B_i \cup \underset{1\leq i \leq g}{\coprod} C_i$$ is a fundamental domain of $\mathfrak{H}_{\Gamma}$. 

Using the same techniques as in \cite[\S1]{shalit}, we get the following lemma:

\begin{lemma}[Normalization of $\Gamma$]\label{normalization}
We can choose $\Gamma$ such that there is a Schottky basis $\alpha_1,..., \alpha_g$ of $\Gamma$,  and a fundamental domain $D$ satisfying:

\begin{enumerate}

\item $B_i$ is the open residue disk in the closed unit disk of $\mathbf{P}^1_K$ which reduces to $\lambda(e_i)^p$, $\forall 0\leq i \leq g$.

\item For $1\leq i \leq g$, $\alpha_i$ corresponds, under the identification $\Gamma^{ab} = \Z[S]^0$, to $e_i-e_0$.

\item $\alpha_i$ sends bijectively $\mathbf{P}^1_K-B_i$ to $C_i$ and $\alpha_i^{-1}$ sends bijectively $\mathbf{P}^1_K-C_i$ to $B_i$.

\item The annulus $c_i$ is isomorphic, as a rigid analytic space, to $\{z , |p| < |z|< 1\}$.

\end{enumerate}

\end{lemma}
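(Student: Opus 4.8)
The plan is to follow de~Shalit's method from \cite[\S1]{shalit}, replacing the $j$-line by the $\lambda$-line: the description of the special fibre of $\mathfrak{X}$ recalled above --- two copies of $M_{\Gamma(2)}\otimes k\simeq\mathbf{P}^1_k$ glued at the supersingular points via $x\mapsto x^p$ --- plays the role of the $j$-line picture, and the local computation inside the proof of Proposition~\ref{semi-stable}(ii) will supply the widths of the annuli.

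First I would use that the isomorphism $\mathfrak{X}_K^{an}\simeq\mathfrak{H}_{\Gamma}/\Gamma$ pins down $\Gamma$ only up to conjugation in $\PGL_2(K)$, so one is free to prescribe the coordinate on $\mathbf{P}^1_K$ in which $\mathfrak{H}_{\Gamma}$ lies. Since a Schottky group acts freely on the vertices of $\mathcal{T}_{\Gamma}$, the composite $\rho^{-1}(\tilde{v}_c)\hookrightarrow\mathfrak{H}_{\Gamma}\xrightarrow{\tau}\mathfrak{X}_K^{an}$ is injective, with image the tube of the smooth locus of the component of $\mathfrak{X}_k$ attached to the vertex $v_{c'}$. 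By Proposition~\ref{semi-stable}(i) that component is $\mathbf{P}^1_k$, with coordinate the reduction of $\lambda$; on it the supersingular points sit --- by the Frobenius gluing recalled above, the labelling $e_i\mapsto\lambda(e_i)$, and the fact that $\lambda(e_i)\in k=\mathbf{F}_{p^2}$ --- exactly at the values $\lambda(e_i)^p$. Thus the tube in question is $\mathbf{P}^1_K$ with the $g+1$ open residue disks of the $\lambda(e_i)^p$ removed, while by Proposition~\ref{red2} the set $A=\rho^{-1}(\tilde{v}_c)$ is itself $\mathbf{P}^1_K$ minus $g+1$ open disks $B_i$, and $\tau$ identifies the two holed spheres. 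I would then conjugate $\Gamma$ so that this identification becomes the identity on the ambient $\mathbf{P}^1_K$. This is legitimate because the identification is compatible with reduction, hence for $g\geq 2$ it comes from the (unique) automorphism of $\mathbf{P}^1_k$ fixing the $\geq 3$ supersingular points; the case $p=5$ (so $g=1$) requires normalising in addition along $\tilde{e}_0$, as in \cite{shalit}. With this choice $B_i$ is the residue disk of $\lambda(e_i)^p$, which is assertion~(i).

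Assertions~(ii) and~(iii) are then a matter of matching the Schottky generators with edges of the tree. Once the fundamental domain $D=\mathbf{P}^1_K-\coprod_{1\le i\le g}(B_i\cup C_i)$ is fixed, the adapted Schottky basis $\alpha_1,\dots,\alpha_g$ is characterised by $\alpha_i(\mathbf{P}^1_K-B_i)=C_i$ and $\alpha_i^{-1}(\mathbf{P}^1_K-C_i)=B_i$, which is~(iii); that the holes are paired $B_i\leftrightarrow C_i$, and not in some other way, is forced because $B_i$ and $C_i$ reduce to the two sides of one and the same node of $\mathfrak{X}_k$ --- the one attached to $e_i$ --- as $\tilde{e}_i'$ reduces to $e_i$. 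For~(ii), one reads the class of $\alpha_i$ in $\Gamma^{ab}\simeq\rH_1(\Delta,\Z)\simeq\Z[S]^0$ (Proposition~\ref{red1}(iv)) off the action on $\mathcal{T}_{\Gamma}$: the axis of $\alpha_i$ projects in $\Delta=\mathcal{T}_{\Gamma}/\Gamma$ to a loop representing $e_i-e_0$ for the orientation fixed above.

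For~(iv), by Proposition~\ref{red1}(ii) the annulus $c_i=\rho^{-1}(\tilde{e}_i')$ is $D(0,1[\,-\,D(0,|p^{n_i}|]$, where $n_i$ is the thickness of the node of $\mathfrak{X}_k$ attached to $e_i$, i.e.\ the completed strict henselian local ring of $\mathfrak{X}$ there is $W(\bar{k})[[X,Y]]/(XY-p^{n_i})$; the computation in the proof of Proposition~\ref{semi-stable}(ii) gives $n_i=1$ for every $i$, since a supersingular pair $(E,\alpha_2)$ has no automorphism outside $\{\pm 1\}$, whence $c_i\simeq\{z:|p|<|z|<1\}$. I expect the first step to be the main obstacle: one must identify honestly the tube of the smooth locus of the relevant component with its \emph{modular} coordinate, check the Frobenius twist $\lambda\mapsto\lambda^p$ appearing in the gluing of the two components (equivalently, the compatibility of the Atkin--Lehner involution $w_p$ with the two degeneracy maps $\mathfrak{X}\to M_{\Gamma(2)}$), and justify that the isomorphism of holed spheres extends to an automorphism of $\mathbf{P}^1_K$, handling $g=1$ separately; granting this, (ii)--(iv) reduce to bookkeeping along the tree together with the regularity of $\mathfrak{X}$ already established in Proposition~\ref{semi-stable}(ii).
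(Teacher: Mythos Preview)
Your proposal is correct and follows essentially the same approach as the paper: both invoke de~Shalit's method \cite[\S1]{shalit}, identifying $A=\rho^{-1}(\tilde{v}_c)$ with the tube over the smooth locus of one component of $\mathfrak{X}_k$ via the standard/modular reduction to obtain~(i), and reading the Schottky generators off the monodromy of the covering $\mathcal{T}_\Gamma\to\Delta$ for~(ii)--(iii). Your treatment is in fact more thorough than the paper's brief sketch: you address~(iv) explicitly via the regularity computation of Proposition~\ref{semi-stable}(ii) (so each node has thickness $n_i=1$) and flag the $g=1$ normalization subtlety, neither of which the paper's own proof mentions.
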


\begin{proof}\
\begin{enumerate}

\item The standard reduction sends $\mathbf{P}^1_K $ to $\mathbf{P}^1_k$ (see \cite[II.2.4.2]{W-L}). If we restrict the standard reduction to $A$, we get back $\rho$, so $\rho$ sends $A$ to $\mathbf{G}_m$ minus the $\rho(B_i\cap \mathfrak{H}_{\Gamma})=\lambda(e_i)^p$'s.
\item $\mathcal{T}_{\Gamma}$ is the universal covering of $\Delta$, so $\pi_1(\Delta) \simeq \Gamma$. Let $\tilde{v}_i$ be a neighbour of $\tilde{v}_{c'}$ whose edge to $\tilde{v}_{c'}$ lifts $e_i$. By monodromy, we can choose $\alpha_i$ which lifts $e_i-e_0$, such that $\alpha_i^{-1}(\tilde{v}_i)=\tilde{v}_{c}$, and since the action of $\Gamma$ on the graph is continuous, we see that $\alpha_i(\mathbf{P}^1_K - B_i)=C_i$ (\textit{i.e.} $\alpha_i^{-1}$ sends neighbours of $\tilde{v}_i$ to neighbours of $\tilde{v}_{c}$). This also shows (iii).

\end{enumerate} 

\end{proof}

\section{Extension of $\Phi$ to $\mathbf{Z}[S] \times \mathbf{Z}[S]$}
For $a, b \in \mathfrak{H}_{\Gamma}$, define the meromorphic function $\theta(a,b; z) = \theta((a)-(b); z)$ ($z \in \mathfrak{H}_{\Gamma}$) by the convergent product 
$$ \theta(a,b;z)= \prod_{\gamma \in \Gamma} \frac{z-\gamma a}{z-\gamma b} \text{ .}$$

See \cite{D} for the basic properties of these theta functions. 

For all $a,b \in \mathfrak{H}_{\Gamma}$, the theta series $\theta(a,b,.)$ converges and defines a rigid meromorphic function on $\mathfrak{H}_{\Gamma}$ (which is modified by a constant if we conjugate $\Gamma$). We extend $\theta$ to degree zero divisors $D$ of $\mathcal{H}_\Gamma$. The series $\theta(D,.)$ is entire if and only if $\tau_{*}(D)=0$, where we recall that $\tau : \mathfrak{H}_{\Gamma} \rightarrow \mathfrak{X}^{an}_K$ is the uniformization. 

The proposition below follows from \cite{D} (see also \cite{shalit}).

\begin{prop}\label{theta}\cite{D}
\begin{enumerate}
\item $\theta(a,b,z)=c(a,b,\alpha) \theta(a,b,\alpha z)$, where $\alpha \in \Gamma$ and $c(a,b,\alpha \beta)=c(a,b,\alpha)c(a,b,\beta)$.
\item The function $u_{\alpha}(z)=\theta(a,\alpha a,z)$ does not depend on $a$, and $u_{\alpha \beta}=u_{\alpha} u_{\beta}$.
\item $c(a,b,\alpha)= u_{\alpha}(a)/u_{\alpha}(b)$.
\item $\theta(a,b,z)/\theta(a,b,z')=\theta(z,z',a)/\theta(z,z',b)$.

\end{enumerate}
\end{prop}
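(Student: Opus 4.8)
The plan is to recover Proposition \ref{theta} from Drinfeld's computations in \cite{D}, working directly with the product defining $\theta$. First I would recall from \cite{D} the analytic input I need: for $a,b\in\mathfrak{H}_{\Gamma}$ the product $\theta(a,b;z)=\prod_{\gamma\in\Gamma}\frac{z-\gamma a}{z-\gamma b}$ converges (its factors tend $p$-adically to $1$, uniformly on affinoid subdomains of $\mathfrak{H}_{\Gamma}$), defines a rigid meromorphic function on $\mathfrak{H}_{\Gamma}$ with divisor $\sum_{\gamma}[(\gamma a)-(\gamma b)]$, extends multiplicatively to degree-zero divisors of $\mathfrak{H}_{\Gamma}$, and — a convergent non-archimedean product being insensitive to the order of its factors — is unchanged when the product is re-indexed by any fixed left or right translate of $\Gamma$. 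The rest of the argument rests on one elementary identity: if $\alpha\in\PGL_2(K)$ has matrix entries $r,s,t,u$ (so $\alpha z=\frac{rz+s}{tz+u}$), then $\alpha z-w=(z-\alpha^{-1}w)\cdot\frac{r-wt}{tz+u}$ as rational functions of $z$, for every $w\in\mathbf{P}^1_K$; in particular $\gamma z-a=(z-\gamma^{-1}a)\cdot\frac{r_{\gamma}-a\,t_{\gamma}}{t_{\gamma}z+u_{\gamma}}$, and in a cross-ratio the auxiliary factors $\frac{r-wt}{tz+u}$ occur in symmetric pairs and cancel.

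For (i): substituting this identity factor by factor into $\theta(a,b;\alpha z)=\prod_{\gamma}\frac{\alpha z-\gamma a}{\alpha z-\gamma b}$ gives $\theta(a,b;\alpha z)=\bigl(\prod_{\gamma}\frac{z-\alpha^{-1}\gamma a}{z-\alpha^{-1}\gamma b}\bigr)\cdot\bigl(\prod_{\gamma}\frac{r-t\gamma a}{r-t\gamma b}\bigr)$; the first product equals $\theta(a,b;z)$ after re-indexing $\gamma\mapsto\alpha^{-1}\gamma$, while the second depends only on $(a,b,\alpha)$, so $c(a,b,\alpha):=\theta(a,b;z)/\theta(a,b;\alpha z)$ is independent of $z$. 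Iterating the functional equation yields $c(a,b,\alpha)^{-1}c(a,b,\beta)^{-1}\theta(a,b;z)=\theta(a,b;\alpha\beta z)=c(a,b,\alpha\beta)^{-1}\theta(a,b;z)$, hence the cocycle relation, so $\alpha\mapsto c(a,b,\alpha)$ is a homomorphism $\Gamma\to K^{\times}$ and factors through $\Gamma^{ab}$. For (iv): applying the identity to $\theta(z,z';a)/\theta(z,z';b)=\prod_{\gamma}\frac{(a-\gamma z)(b-\gamma z')}{(b-\gamma z)(a-\gamma z')}$, each of $r_{\gamma}-a\,t_{\gamma}$, $r_{\gamma}-b\,t_{\gamma}$, $t_{\gamma}z+u_{\gamma}$, $t_{\gamma}z'+u_{\gamma}$ appears once in numerator and once in denominator and cancels, leaving $\prod_{\gamma}\frac{(z-\gamma^{-1}a)(z'-\gamma^{-1}b)}{(z-\gamma^{-1}b)(z'-\gamma^{-1}a)}$, which after $\gamma\mapsto\gamma^{-1}$ is $\prod_{\gamma}\frac{(z-\gamma a)(z'-\gamma b)}{(z-\gamma b)(z'-\gamma a)}=\theta(a,b;z)/\theta(a,b;z')$; this is a form of Weil reciprocity on the Mumford curve $\mathfrak{X}_K^{an}$.

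For (ii): the ratio $\theta(a,\alpha a;z)/\theta(a',\alpha a';z)$ equals $\theta\bigl((a)-(\alpha a)-(a')+(\alpha a');z\bigr)=\theta(a,a';z)/\theta(\alpha a,\alpha a';z)$, and $\theta(\alpha a,\alpha a';z)=\theta(a,a';z)$ by re-indexing $\gamma\mapsto\gamma\alpha$; hence this ratio is $1$, so $u_{\alpha}(z):=\theta(a,\alpha a;z)$ does not depend on $a$. Splitting $(a)-(\alpha\beta a)=[(a)-(\beta a)]+[(\beta a)-(\alpha(\beta a))]$ and reading the second term — by the independence just proved, with base point $\beta a$ — as $u_{\alpha}(z)$, one gets $u_{\alpha\beta}=u_{\alpha}u_{\beta}$. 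Finally (iii) combines the previous parts: $c(a,b,\alpha)=\theta(a,b;z)/\theta(a,b;\alpha z)$ equals, by (iv) with $z'=\alpha z$, $\theta(z,\alpha z;a)/\theta(z,\alpha z;b)$, which by (ii) is $u_{\alpha}(a)/u_{\alpha}(b)$. The one point needing genuine care throughout is the legitimacy of the re-indexings — the convergence and order-independence of the infinite products — which is exactly the input I would quote from \cite{D}; everything else is bookkeeping with divisors.
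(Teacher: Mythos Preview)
Your proof is correct. The paper does not actually give a proof of this proposition: it simply records the statement with the citation \cite{D} and notes that it follows from Manin--Drinfeld (see also \cite{shalit}). What you have written is precisely the standard derivation from \cite{D} --- the re-indexing arguments for (i), (ii), (iv) and the combination of (ii) and (iv) for (iii) --- so there is nothing to compare beyond saying that you have unpacked the reference the paper is content to cite.
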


We recall that $\Phi:\mathbf{Z}[S]^0 \times \mathbf{Z}[S]^0 \rightarrow K^{\times}$ is defined by:

$$\Phi (\alpha, \beta)=\theta(a,\alpha a,z)/\theta(a,\alpha a, \beta z)=u_{\alpha}(z)/u_{\alpha}(\beta z).$$

We can identify $\Gamma^{ab}$ with $\Z[S]^0$ since $\cT_{\Gamma}$ is the universal covering of the graph $\Delta$. 
 
 Manin and Drinfeld proved that $v_K \circ \Phi$ is positive definite ($v_K$ is the $p$-adic valuation of $K$).

\begin{lemma}

The pairing $\Phi$ takes values in $\Q_p^{\times}$.

\end{lemma}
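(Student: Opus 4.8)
The plan is to exploit the fact that $\mathfrak{X}$ is defined over $\Q_p$, or rather that its special fiber and the whole combinatorial/uniformization picture descends to $\Q_p$, so that the Galois group $\Gal(K/\Q_p) = \langle \Frob_p \rangle$ acts compatibly on everything in sight. More precisely, $\mathfrak{X}_{\Z_p} := \mathfrak{M}_{\Gamma_0(p)\cap\Gamma(2)}$ coarse space is a scheme over $\Z_p$ whose base change to $\cO_K$ is $\mathfrak{X}$; the Atkin-Lehner involution $w_p$ is defined over $\Q_p$; and by Mumford's theory the uniformization $\tau: \mathfrak{H}_\Gamma \to \mathfrak{X}_K^{an}$ is $\Gal(\bar K/K)$-equivariant. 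The point is that one can upgrade this to $\Gal(\bar K/\Q_p)$-equivariance after replacing $\Gamma$ by a conjugate: since $\mathfrak{X}_K^{an}$ is the base change of a rigid space over $\Q_p$, the Frobenius $\sigma \in \Gal(K/\Q_p)$ acts on $\mathfrak{H}_\Gamma/\Gamma$, hence (lifting to the universal cover, which is characterized by $\Gamma$ being the group of deck transformations together with the reduction structure) it lifts to a semilinear automorphism $\tilde\sigma$ of $\mathbf{P}^1_K$ normalizing $\Gamma$; conjugating $\Gamma$ suitably we may arrange $\tilde\sigma$ to be the standard Frobenius on $\mathbf{P}^1_K$, i.e. $\Gamma \subset \PGL_2(\Q_p)$.

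From there the argument is short. First I would record that $\Gamma$ can be conjugated into $\PGL_2(\Q_p)$: this is where the hypothesis that all supersingular points and all irreducible components of the special fiber are $k$-rational (indeed the combinatorial data descends to $\F_p$) is essential, because it guarantees that the Bruhat–Tits subtree $\cT_\Gamma$, the reduction map $\rho$, and the limit set $\cL$ are all Frobenius-stable, forcing $\Gamma$ (up to conjugacy) to consist of $\Q_p$-rational matrices. Concretely, in the normalization of Lemma \ref{normalization}, the fixed points of the $\alpha_i$ are attracting/repelling points lying over the $k$-rational supersingular points, so they are $\Q_p$-rational, and a loxodromic element of $\PGL_2(K)$ with $\Q_p$-rational fixed points and ratio of eigenvalues in $K^\times$ whose associated divisor is Frobenius-invariant is conjugate to an element of $\PGL_2(\Q_p)$. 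Second, once $\Gamma \subset \PGL_2(\Q_p)$, every $\gamma a$ appearing in the product $\theta(a,\alpha a; z) = \prod_{\gamma\in\Gamma}\frac{z-\gamma a}{z - \gamma \alpha a}$ can be taken with $a, z \in \mathbf{P}^1(\Q_p)$ (the theta function's value is a limit of products of differences of $\Q_p$-points), and the factors $c(a,b,\alpha) = u_\alpha(a)/u_\alpha(b)$ then lie in $\Q_p^\times$; since $\Phi(\alpha,\beta) = u_\alpha(z)/u_\alpha(\beta z) = c(a,\alpha a, \beta)^{-1}$ with the arguments chosen $\Q_p$-rational, $\Phi$ takes values in $\Q_p^\times$.

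Alternatively, and perhaps more cleanly, I would phrase the last step via Galois descent on the values directly: for $\sigma = \Frob_p$, the semilinear lift $\tilde\sigma$ (after conjugating $\Gamma$ to be $\sigma$-stable) satisfies $\tilde\sigma \gamma \tilde\sigma^{-1} \in \Gamma$ for all $\gamma$, and the induced action on $\Gamma^{ab} \simeq \Z[S]^0$ is the permutation $e_i \mapsto e_i^{(p)}$ induced by Frobenius on supersingular points — which is exactly the action of $w_p$ on the edges as noted in the text. Functoriality of the Drinfeld construction (Proposition \ref{theta}, which shows $\theta$ transforms by the cocycle $c$ and $u_\alpha$ multiplicatively) gives $\sigma(\Phi(\alpha,\beta)) = \Phi(\tilde\sigma_*\alpha, \tilde\sigma_*\beta)$. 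But $w_p$ acts on $\mathfrak{X}$ over $\Q_p$ and induces an automorphism of $J_0$-type uniformization data; combined with the fact (provable from the graph being a "theta graph" with two vertices, or from de Shalit's computation) that $\Phi$ is $w_p$-invariant, one concludes $\sigma(\Phi(\alpha,\beta)) = \Phi(\alpha,\beta)$, hence $\Phi(\alpha,\beta) \in (K^\times)^{\sigma} = \Q_p^\times$. The main obstacle, and the step requiring genuine care, is the first one: rigorously producing the $\Gamma$-conjugacy and the semilinear Frobenius lift $\tilde\sigma$ from the descent datum on $\mathfrak{X}_K^{an}$ — i.e. showing that the Mumford uniformization of a curve with totally degenerate reduction defined over $\Q_p$, with split (here even $\F_p$-rational) reduction graph, has $\Gamma$ conjugate into $\PGL_2(\Q_p)$. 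This is standard (it is in Mumford's paper and in the Gerritzen–van der Put book for split degenerate reduction), but it must be invoked with the right hypotheses, and one should check that the $\F_p$-rationality of components and supersingular points — established in Theorem \ref{main-thm}(i) and the Deligne–Rapoport description — is exactly what guarantees split reduction so that no outer twist appears.
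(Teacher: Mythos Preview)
Your alternative argument via $w_p$ is essentially the paper's proof, which just invokes de Shalit's Lemma~0.4: since $\mathfrak{X}$ descends to $\Q_p$, Frobenius $\sigma\in\Gal(K/\Q_p)$ acts on $\Gamma^{ab}\simeq\Z[S]^0$ by $e_i\mapsto e_i^{(p)}$; the Atkin--Lehner involution $w_p$ (defined over $\Q_p$) induces the same permutation of $S$; hence $\sigma(\Phi(\alpha,\beta))=\Phi(w_p\alpha,w_p\beta)=\Phi(\alpha,\beta)$.

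Your first approach, however, has a genuine gap, and your closing paragraph imports that gap into the second approach as well. You assert that the supersingular points are $\mathbf{F}_p$-rational (``the combinatorial data descends to $\mathbf{F}_p$'', ``the $\mathbf{F}_p$-rationality of components and supersingular points \dots\ established in Theorem~\ref{main-thm}(i)'') and conclude that $\Gamma$ can be conjugated into $\PGL_2(\Q_p)$. This is false: the supersingular points are only $k=\mathbf{F}_{p^2}$-rational in general. The paper makes this explicit --- the disks $B_i$ in the normalization lemma reduce to $\lambda(e_i)^p\in k$, and the proof of the main theorem later splits into the cases $\lambda(e_0)\in\mathbf{F}_p$ versus $\lambda(e_0)\in k\setminus\mathbf{F}_p$. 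When some $\lambda(e_i)\notin\mathbf{F}_p$, the generator $\alpha_i$ has its attracting and repelling disks centered at residues outside $\mathbf{F}_p$ and cannot lie in $\PGL_2(\Q_p)$. Fortunately your second argument does not actually need this: it only requires $\Gamma$ to be $\sigma$-stable \emph{as a set} (so that $\sigma$ acts on $\Gamma^{ab}$), which follows from $\mathfrak{X}$ being defined over $\Q_p$ without any $\mathbf{F}_p$-rationality of the singular points.
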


\begin{proof}
Since $p\equiv 1 \text{ (mod } 2 \text{)}$, the Atkin--Lehner automorphism $w_p$ is an involution which is the Frobenius on the supersingular points. The proof is now the same as in Lemma \cite[0.4]{shalit}.
\end{proof}

Note that our pairing $\Phi:\mathbf{Z}[S]^0 \times \mathbf{Z}[S]^0 \rightarrow \Q_p^{\times}$ does not depend on the choice of $\lambda:X(2)_\Q = M_{\Gamma(2)} \otimes \Q \xrightarrow{\sim} \mathbf{P}^1_\Q$. We are going to define an extension $\Phi: \mathbf{Z}[S] \times \mathbf{Z}[S] \rightarrow K^{\times}$ (which depends on the choice of $\lambda$). 

\subsection{Extension of $\Phi$ to $\mathbf{Z}[S]^0 \times \mathbf{Z}[S]$}
For all $0 \leq i \leq g$, we choose $\xi_c^{(i)}$ (resp. $\xi_{c'}^{(i)}$) in $\mathfrak{H}_{\Gamma}$ which reduces modulo $\Gamma$ to the cusp $\xi_c \otimes \Q_p$ (resp. $\xi_{c'} \otimes \Q_p$), and such that $\xi_c^{(i)}$ and $\xi_{c'}^{(i)}$ are separated by an annulus reducing to $e_i$.  

Let $\tilde{v}_c^{(i)}$ and $\tilde{v}_{c'}^{(i)}$ be two neighbour vertices of $\mathcal{T}_{\Gamma}$ above $v_c$ and $v_{c'}$ respectively, separated by an edge reducing to $e_i$. We fix $\tilde{v}_c^{(0)} = \tilde{v}_c$ and $\tilde{v}_{c'}^{(0)} = \tilde{v}_{c'}$. We then choose $\xi_c^{(i)}$ (resp. $\xi_{c'}^{(i)}$) in $\rho^{-1}(\tilde{v}_c^{(i)})$ (resp.  $\rho^{-1}(\tilde{v}_{c'}^{(i)})$). If we choose, for all $0 \leq i \leq g$, $\xi_c^{(i)} = z_0 \in A$, then the $\xi_{c'}^{(i)}$ are uniquely determined, and satisfy
$$\xi_{c'}^{(i)} = \alpha_i^{-1}(\xi_{c'}^{(0)}) \in B_i \text{ .}$$
Indeed, we have $\xi_{c'}^{(0)} \in \rho^{-1}(\tilde{v}_{c'})=A'$ and $\alpha_i^{-1}(A') \subset \alpha_i^{-1}(\mathbf{P}^1 - C_i') \subset B_i$.

Note that for all $i$, the pair $(\xi_c^{(i)}, \xi_{c'}^{(i)})$ is uniquely determined modulo $\Gamma$. We can assume that $z_0 \neq \infty$.

Let $a \in \mathfrak{H}_{\Gamma}$. We then define, for all $\alpha \in \Gamma$, 
\begin{equation}
\Phi(\alpha, e_i) = \frac{\theta(a, \alpha(a), \xi_{c'}^{(i)})}{\theta(a, \alpha(a), \xi_{c}^{(i)})}=\frac{u_{\alpha}(\xi_{c'}^{(i)})}{u_{\alpha}(\xi_{c}^{(i)})}=\frac{u_{\alpha}(\xi_{c'}^{(i)})}{u_{\alpha}(z_0)}
\label{def_Q_1}
\end{equation}
This definition does not depend on the choice of $a$ and $(\xi_c^{(i)}, \xi_{c'}^{(i)})$, by Proposition \ref{theta}. Since $K$ is complete and $\xi_c^{(i)}$ and $\xi_{c'}^{(i)}$ are defined over $K$, $\Phi$ takes values in $K^{\times}$.
\begin{lemma}
The pairing $\Phi$ defined above extends the previous pairing $\Phi$ on $\Z[S]^0 \times \Z[S]^0$.
\end{lemma}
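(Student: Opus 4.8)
The plan is to reduce the identity to a generating set of $\Z[S]^0$ and then recognize the new formula as the old one evaluated at a shifted base point. First I would fix the extension convention: $\Phi(\alpha,\cdot)$ is extended from the basis $\{e_0,\dots,e_g\}$ of $\Z[S]$ to all of $\Z[S]$ by multiplicativity, so that $\Phi(\alpha,\sum_i n_i e_i)=\prod_i\Phi(\alpha,e_i)^{n_i}$, while the original pairing on $\Z[S]^0\times\Z[S]^0$ is $\Z$-bilinear; both are also insensitive in the first variable to the choice of representative in $\Gamma$ of a class in $\Gamma^{ab}$, by Proposition \ref{theta}(ii). Hence it suffices to check that the two recipes give the same value on $(\alpha,e_i-e_0)$ for every $\alpha\in\Gamma$ and every $1\le i\le g$, since the elements $e_i-e_0$ generate $\Z[S]^0$.

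Next I would run the computation. Using (\ref{def_Q_1}),
$$\Phi(\alpha,e_i-e_0)=\frac{\Phi(\alpha,e_i)}{\Phi(\alpha,e_0)}=\frac{u_{\alpha}(\xi_{c'}^{(i)})/u_{\alpha}(z_0)}{u_{\alpha}(\xi_{c'}^{(0)})/u_{\alpha}(z_0)}=\frac{u_{\alpha}(\xi_{c'}^{(i)})}{u_{\alpha}(\xi_{c'}^{(0)})}\text{ .}$$
By the relation $\xi_{c'}^{(i)}=\alpha_i^{-1}(\xi_{c'}^{(0)})$ recorded just before the statement (where $\alpha_1,\dots,\alpha_g$ is the normalized Schottky basis and $\alpha_i$ corresponds to $e_i-e_0$ under $\Gamma^{ab}\simeq\Z[S]^0$ by Lemma \ref{normalization}(ii)), this equals $u_{\alpha}(\alpha_i^{-1}(\xi_{c'}^{(0)}))/u_{\alpha}(\xi_{c'}^{(0)})$. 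On the other hand the original pairing is $\Phi(\alpha,\beta)=u_{\alpha}(z)/u_{\alpha}(\beta z)$ for $\beta\in\Gamma$ and any $z\in\mathfrak{H}_{\Gamma}$, independently of $z$ by Proposition \ref{theta}; taking $\beta=\alpha_i$ and $z=\alpha_i^{-1}(\xi_{c'}^{(0)})\in\mathfrak{H}_{\Gamma}$ gives exactly $u_{\alpha}(\alpha_i^{-1}(\xi_{c'}^{(0)}))/u_{\alpha}(\xi_{c'}^{(0)})$. Since $\alpha_i$ corresponds to $e_i-e_0$, this is the value of the original pairing at $(\alpha,e_i-e_0)$, so the two pairings agree on all the generators, hence on $\Z[S]^0\times\Z[S]^0$.

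The computation is immediate from Proposition \ref{theta}, so the only delicate point is the bookkeeping of orientations and inversions: one must check that the conventions fixing the lifts $\tilde e_i$, the identification $\alpha_i\leftrightarrow e_i-e_0$ of Lemma \ref{normalization}, and the relation $\xi_{c'}^{(i)}=\alpha_i^{-1}(\xi_{c'}^{(0)})$ are mutually consistent, so that no spurious passage from $\alpha_i$ to $\alpha_i^{-1}$ is introduced. Once these are aligned, the identity follows from the displayed substitution.
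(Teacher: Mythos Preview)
Your proof is correct and follows essentially the same route as the paper: both compute $\Phi(\alpha,e_i)/\Phi(\alpha,e_0)=u_{\alpha}(\xi_{c'}^{(i)})/u_{\alpha}(\xi_{c'}^{(0)})$, then use the relation $\xi_{c'}^{(i)}=\alpha_i^{-1}(\xi_{c'}^{(0)})$ to recognize this as $u_{\alpha}(z)/u_{\alpha}(\alpha_i z)$ with $z=\xi_{c'}^{(i)}$, which is the original $\Phi(\alpha,\alpha_i)$. The paper's version is simply the one-line display of this computation, without your surrounding discussion of the reduction to generators and orientation bookkeeping.
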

\begin{proof}
Using Proposition \ref{theta}, we have: $$\frac{\Phi(\alpha, e_i)}{\Phi(\alpha, e_0)} = \frac{u_{\alpha}(\xi_{c'}^{(i)})}{u_{\alpha}(\xi_{c'}^{(0)})} = \frac{u_{\alpha}(\xi_{c'}^{(i)})}{u_{\alpha}\left(\alpha_i(\xi_{c'}^{(i)})\right)}  = \Phi(\alpha, \alpha_i) = \Phi(\alpha, e_i-e_0)\text{ .}$$
\end{proof}

\section{Extension of $\Phi$ to $\Z[S] \times \Z[S]$}

Let $\lambda$ be the Hauptmodul for $M_{\Gamma(2)}$; recall that $\lambda$ induces an isomorphism $$M_{\Gamma(2)} \otimes \mathbf{Q} \simeq \mathbf{P}^{1}_{\mathbf{Q}} \text{ .}$$ 
We define $\lambda' :\mathfrak{X}_K^{an}\rightarrow \mathbf{P}^1_K$ by $\lambda' = \lambda \circ w_p$

\subsection{Atkin--Lehner involution on $\mathfrak{H}_{\Gamma}$}
The Atkin--Lehner involution acts on $\Gamma \backslash \mathcal{T}_{\Gamma}$ and lifts to an orientation reversing involution $w_p$ of $\mathcal{T}_{\Gamma}$ (by the universal covering property). By \cite{G} ch. VII Sect. $1$, there is a unique class in $N(\Gamma)/\Gamma$ (where $N(\Gamma)$ is the normalizer of $\Gamma$ in $\PGL_2(K)$) inducing $w_p$ on $\mathcal{T}_{\Gamma}$. We denote by $w_p$ the induced map of $\mathfrak{H}_{\Gamma}$ (it is only unique modulo $\Gamma$). 

\subsection{Definition of $\Phi$}
Fix $0 \leq i , j \leq g$. 

Let $z \in \mathfrak{H}_{\Gamma}$ near $\xi_{c}^{(i)}$ and $z'$ near $\xi_{c'}^{(i)}$ such that $\tau(z) = w_p(\tau(z'))$. Recall that by hypothesis, $\xi_{c}^{(i)}=z_0$ is independent of $i$.

For $0 \leq i,j \leq g$, we let:
\begin{equation}
\Phi(e_i,e_j) = \text{lim } \lambda'(\tau(z))^2 \cdot \frac{\theta(z',z,\xi_{c'}^{(j)})}{\theta(z',z,\xi_{c}^{(j)})}
\label{def_Q_2}
\end{equation}
where $z$ and $z'$ approach $\xi_{c}^{(i)}$ et $\xi_{c'}^{(i)}$ respectively.
Since at $z = z_0$, $\lambda' \circ \tau$ has a simple pole and the numerator and denominator have a simple zero and simple pole respectively, $\Phi(e_i,e_j)$ is finite, and is in $K^{\times}$ since $K$ is complete (we choose $z$ and $z'$ in $K$ to compute the limit). We bilinearly extend $\Phi$ to $\Z[S] \times \Z[S]$.

\begin{lemma}
The pairing $\Phi$ defined above extends the previous pairing on $\Z[S] \times \Z[S]^0$.\end{lemma}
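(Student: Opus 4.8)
The plan is to show that the new definition of $\Phi(e_i, e_j)$ in \eqref{def_Q_2} agrees with the previously constructed pairing on $\Z[S]^0 \times \Z[S]$ (equivalently $\Z[S] \times \Z[S]^0$, by symmetry of the argument). Since both pairings are bilinear, it suffices to check that $\Phi(e_i - e_0, e_j)$ computed via \eqref{def_Q_2} equals $\Phi(e_i - e_0, e_j)$ computed via \eqref{def_Q_1}; that is, I would show that the ratio $\Phi(e_i, e_j)/\Phi(e_0, e_j)$ given by \eqref{def_Q_2} equals $u_{\alpha_i}(\xi_{c'}^{(j)})/u_{\alpha_i}(\xi_{c}^{(j)})$, recalling that $\alpha_i \in \Gamma^{ab}$ corresponds to $e_i - e_0$ under the normalization of Lemma \ref{normalization}.

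First I would write out the defining limit for both $\Phi(e_i,e_j)$ and $\Phi(e_0,e_j)$. For $\Phi(e_i, e_j)$ we take $z \to \xi_c^{(i)} = z_0$ and $z' \to \xi_{c'}^{(i)}$; for $\Phi(e_0, e_j)$ we take $z \to \xi_c^{(0)} = z_0$ and $z' \to \xi_{c'}^{(0)}$. In both cases the prefactor $\lambda'(\tau(z))^2$ is the \emph{same} function of $z$ evaluated near the same point $z_0$, and $\tau(z)$, $\tau(z')$ are linked by $w_p$ in both cases. The key algebraic input is Proposition \ref{theta}(iv), which gives the ``reciprocity'' identity $\theta(z',z,a)/\theta(z',z,b) = \theta(a,b,z')/\theta(a,b,z)$; combined with the cocycle relations (i)--(iii), this lets me rewrite the theta-quotient in \eqref{def_Q_2} and isolate the dependence on the index $i$. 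The crucial geometric fact is that $\xi_{c'}^{(i)} = \alpha_i^{-1}(\xi_{c'}^{(0)})$, established just before the first extension lemma, so that passing from the $i$-th pair to the $0$-th pair to is effected by applying $\alpha_i$, and the automorphy factor $c(\cdot,\cdot,\alpha_i)$ that appears is exactly a ratio of $u_{\alpha_i}$'s.

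The main obstacle — and the step requiring the most care — is controlling the limit of the prefactor $\lambda'(\tau(z))^2$ against the simple zero/pole of the theta-quotient as $z, z' \to z_0$, and checking that these divergent pieces \emph{cancel} in the ratio $\Phi(e_i,e_j)/\Phi(e_0,e_j)$ rather than contributing a spurious constant. Concretely, near $z_0$ the function $\lambda' \circ \tau$ has a simple pole and $\theta(z', z, \xi_{c'}^{(j)})/\theta(z',z,\xi_c^{(j)})$ has (as a function of $z$, with $z'$ slaved to $z$ via $w_p$) a simple zero at $z = z_0$; I must verify that the \emph{leading coefficients} match up so that the product has a well-defined nonzero limit, and that when I form the ratio for indices $i$ and $0$ the choice of local parameter at $z_0$ drops out because it is literally the same point and the same $w_p$-relation in both computations. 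Once this is in hand, the identification $\Phi(e_i,e_j)/\Phi(e_0,e_j) = u_{\alpha_i}(\xi_{c'}^{(j)})/u_{\alpha_i}(\xi_{c}^{(j)}) = \Phi(\alpha_i, e_j)$ follows formally from Proposition \ref{theta}, which is exactly the compatibility asserted by the lemma; the second extension lemma's identity $\Phi(\alpha, e_i)/\Phi(\alpha, e_0) = \Phi(\alpha, e_i - e_0)$ can then be invoked (or re-derived in this context) to conclude.
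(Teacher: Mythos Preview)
Your approach is correct and essentially the same as the paper's. The paper also computes $\Phi(e_i,e_j)/\Phi(e_0,e_j)$ using the same $z$ for both (since $\xi_c^{(i)}=\xi_c^{(0)}=z_0$) and takes $\zeta'=\alpha_i(z')$ as the variable approaching $\xi_{c'}^{(0)}$; because $\tau(\zeta')=\tau(z')$, the $w_p$-constraint and the prefactor $\lambda'(\tau(z))^2$ cancel \emph{identically} before passing to the limit, so the ``leading coefficient'' analysis you worry about is unnecessary. The only minor difference is the theta identity invoked: the paper uses the product formula $\theta(a,b,w)/\theta(\gamma(a),b,w)=\theta(a,\gamma(a),w)$ directly, while you propose first applying reciprocity (iv) and then the automorphy relations (i), (iii); both routes are one line and give $\Phi(\alpha_i,e_j)$.
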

\begin{proof}
Since $\xi_c^{(i)} = \xi_c^{(0)} = z_0$, we have: 
$$\frac{\Phi(e_i, e_j)}{\Phi(e_0, e_j)} =  \text{lim } \frac{\theta(z',z,\xi_{c'}^{(j)})}{\theta(\zeta',z,\xi_{c'}^{(j)})} \cdot \frac{\theta(\zeta', z, \xi_{c}^{(j)})}{\theta(z',z,\xi_{c}^{(j)})} $$
where $\zeta' = \alpha_i(z')$ approaches $\xi_{c'}^{(0)}$. Since $$\frac{\theta(a,b,z)}{\theta(\gamma(a),b, z)} = \theta(a, \gamma(a), z)$$ (which is obvious by the infinite product definition of $\theta$), we get:
$$\frac{\Phi(e_i, e_j)}{\Phi(e_0, e_j)} =  \text{lim } \frac{\theta(z',\alpha_i(z'),\xi_{c'}^{(j)})}{\theta(z',\alpha_i(z'),\xi_{c}^{(j)})} = \Phi(\alpha_i, e_j) \text{ .}$$
\end{proof}

\section{Proof of the main theorem}
Let $\hat{e} = \sum_{e_i \in S} [e_i] \in \Z[S]$ be \textit{the Eisenstein element}. Let $\bar{\Phi}$ be the reduction of $\Phi$ modulo principal units. Let $d = \text{gcd}(p-1, 12)$.
\begin{theorem}\label{thm_principal}
If $i \neq j$, we have:
\begin{equation}
\overline{\Phi}(e_i,e_j)^{\frac{12}{d}}=(\lambda(e_i)-\lambda(e_j))^{(p+1)\cdot \frac{12}{d}} \text{ .}
\label{thm_ij}
\end{equation}
Else, we have:
\begin{equation}
\bar{\Phi}(e_i,\hat{e})^{\frac{12}{d}} =  p^{\frac{12}{d}} \text{ .}
\label{thm_i}
\end{equation}
\end{theorem}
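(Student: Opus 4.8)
The plan is to compute the limit in \eqref{def_Q_2} explicitly by working inside the fundamental domain, using the standard reduction map and the normalization of Lemma \ref{normalization}. The key observation is that, just as in de Shalit's treatment, the theta functions appearing in the definition of $\Phi$ essentially trivialize modulo principal units: on the affinoid $\rho^{-1}(\tilde v_c)=A$ the reduction of $\theta(a,b,z)$ is a rational function on $\mathbf{G}_m \subset \mathbf{P}^1_k$ whose zeros and poles are at the reductions of the $B_i$, i.e.\ at the points $\lambda(e_i)^p$. Concretely, for $z$ reducing into $A$ one has, modulo $U_1(K)$,
$$
\theta(z',z,\xi_{c'}^{(j)}) \equiv \frac{\overline{\xi_{c'}^{(j)}} - \overline{z'}}{\overline{\xi_c^{(j)}} - \overline{z'}} \cdot (\text{analogous factor from }\Gamma\text{-translates outside the fundamental domain})
$$
where the bar denotes standard reduction; the point is that the infinite product over $\Gamma$ contributes only principal units once $z'$ is taken near $\xi_{c'}^{(i)}$, because the nontrivial $\Gamma$-translates of $\xi_{c'}^{(j)}$ and $\xi_c^{(j)}$ lie in the \emph{same} residue disk (the argument is exactly that of \cite[\S1]{shalit}, using that $w_p$ is the Frobenius on supersingular points so that the relevant cross-ratios are $p$-power related). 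Reducing, $\overline{\xi_c^{(j)}}=\overline{z_0}$ lies in $A$, $\overline{\xi_{c'}^{(j)}}$ reduces to the supersingular point $e_j$ on the multiplicative component, and $\overline{z'}$ reduces to $e_i$ since $z'\to\xi_{c'}^{(i)}$.

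First I would make precise the reduction of the prefactor $\lambda'(\tau(z))^2 = \lambda(w_p\tau(z))^2$. Since $\tau(z)\to \xi_c$ and $w_p(\xi_c)=\xi_{c'}$ is the $2$-gon cusp, $\lambda'\circ\tau$ has a simple pole at $z=z_0$; the square is forced because $\lambda$ is a Hauptmodul for $\Gamma(2)$ and the ramification of $w_p$ over the cusp contributes the factor $p+1 = \deg w_p$... more precisely the local parameter at the $2p$-gon cusp is a $(p+1)$-st power of the local parameter downstairs, which is the source of the exponent $p+1$ in the final formula. Combining: the limit of $\lambda'(\tau(z))^2 \cdot \theta(z',z,\xi_{c'}^{(j)})/\theta(z',z,\xi_c^{(j)})$ reduces, modulo $U_1(K)$ and up to a $\frac{12}{d}$-th root ambiguity coming from the automorphisms of supersingular curves (the $d=\gcd(p-1,12)$), to $(\lambda(e_i)-\lambda(e_j))^{p+1}$ when $i\neq j$, because the $\overline{z'}=\lambda(e_i)$ factor in the denominator cancels against a simple zero of $\lambda'\circ\tau$ and leaves the difference $\lambda(e_i)-\lambda(e_j)$ raised to the degree $p+1$. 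For $i=j$ the denominator $\overline{\xi_c^{(j)}}-\overline{z'}$ does not vanish but the annulus $c_i$ (of width $|p|$ by Lemma \ref{normalization}(iv)) contributes the uniformizer $p$ directly; summing over $j$ (i.e.\ pairing with $\hat e$) makes the $\lambda$-dependent terms cancel by the diagonal relation $\prod_{k\neq i}(\lambda(e_i)-\lambda(e_k))^{-(p+1)}$ already visible in Theorem \ref{main-thm}(iii), leaving $p^{12/d}$.

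The steps, in order: (1) recall from \cite[\S1]{shalit} that modulo $U_1(K)$ the theta quotient reduces to the cross-ratio of the standard reductions of its four arguments, valid because of the Frobenius/Atkin--Lehner identity; (2) compute those reductions using Lemma \ref{normalization}(i): $\overline{z'}\mapsto\lambda(e_i)$ (not $\lambda(e_i)^p$, since $z'\in B_i$ but we reduce on the multiplicative side where $w_p$ undoes the Frobenius), $\overline{\xi_{c'}^{(j)}}\mapsto\lambda(e_j)$, $\overline{\xi_c^{(j)}}=\overline{z_0}\in A$; (3) analyze the pole of $\lambda'\circ\tau$ at $z_0$ and extract the exponent $p+1$ from the ramification of $w_p$ over the cusp, carefully tracking the square in \eqref{def_Q_2}; (4) handle the diagonal case via the width-$|p|$ annulus and summation against $\hat e$; (5) account for the $\frac{12}{d}$-th power by noting that $\lambda(e_i)$ is only well-defined up to the action of the automorphism group of $(E_i,\alpha_2)$ on $\mathbf{P}^1$, whose order divides $d$ after passing to $j$-invariants — raising to the $\frac{12}{d}$ kills this ambiguity. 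The main obstacle will be step (3): pinning down the precise local behaviour of $\lambda'\circ\tau$ near the cusp $z_0$ and justifying that the combined valuation and reduction of the prefactor times the theta quotient is exactly $(\lambda(e_i)-\lambda(e_j))^{p+1}$ and not that expression times an unwanted unit or off by a power of $p$; this requires a careful comparison of the two natural local parameters (the one coming from $\mathfrak{H}_\Gamma$ via $\rho$ and the annulus $c_i$, and the one coming from the modular interpretation of the cusp as a $2p$-gon), which is where the semi-stable model of \cite[VI.6.9]{D-R} and the explicit Tate-curve description of the cusps must be invoked.
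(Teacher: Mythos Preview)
Your proposal has the right flavor for the off-diagonal case but misidentifies the mechanism, and the diagonal case has a genuine gap.

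For $i\neq j$: the exponent $p+1$ does \emph{not} come from ramification of $w_p$ at the cusps. In the paper's argument one rewrites $\Phi(e_i,e_0)$ via Proposition~\ref{theta}(iv) and analyzes the infinite product over $\gamma\in\Gamma$ term by term. Exactly two elements contribute a non-principal unit: $\gamma=1$ gives a factor $(\lambda(e_i)-\lambda(e_0))^p$ (via standard reduction, since $z'\in B_i$ reduces to $\lambda(e_i)^p$), and $\gamma=\alpha_i^{-1}$ gives $(\lambda(e_i)-\lambda(e_0))^1$. The latter is extracted by applying cross-ratio invariance under $\alpha_i$ and then under a lift of $w_p$ to $N(\Gamma)$; one must separately handle $\lambda(e_0)\in\mathbf{F}_p$ versus $\lambda(e_0)\in k\setminus\mathbf{F}_p$. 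Your ``ramification of $w_p$'' heuristic would not give the correct split $p+1=p\cdot 1$ and does not explain why all other $\gamma$ contribute principal units. Also, your step (5) is wrong: with the $\Gamma(2)$-structure the automorphism group of each $(E_i,\alpha_2)$ modulo $\{\pm1\}$ is trivial (this is exactly Proposition~\ref{semi-stable}(ii)), so $\lambda(e_i)$ carries no ambiguity; the exponent $\frac{12}{d}$ is not there to kill automorphisms.

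For $i=j$: this is where your proposal breaks down. Saying ``the annulus of width $|p|$ contributes $p$'' and then invoking the product formula from Theorem~\ref{main-thm}(iii) is circular: that formula is the statement to be proved. The paper's argument is entirely different and global: one constructs an explicit modular unit $\mu$ on $X$ (built from $\Delta(pz)/\Delta(z)$, see the Appendix) whose divisor is $\frac{6(p-1)}{d}\bigl((\xi_{c'})-(\xi_c)\bigr)$, and shows it is proportional to the theta product $\nu(z)=\bigl(\prod_\ell\theta(\xi_{c'}^{(\ell)},z_0,z)\bigr)^{12/d}$ by comparing degrees on annuli. Then $\bar\Phi(e_i,\hat e)^{12/d}$ becomes a limit involving $\mu(z')/\mu(z)$, which is computed by the functional equation $\mu\circ w_p=p^{-12/d}\mu^{-1}$ and an explicit Fourier expansion at $\xi_c$. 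The $\frac{12}{d}$ enters precisely because $u=(\Delta(pz)/\Delta(z))^{1/d}$ is the smallest power that is a genuine function on $X_0(p)$. None of this structure appears in your outline, and I do not see how to complete the diagonal case without it.
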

Theorem \ref{main-thm} follows from Theorem \ref{thm_principal} since $\mathbf{Q}_p^{\times}/U_1(\mathbf{Q}_p)$ has no $\frac{12}{d}$-torsion.
In the rest of this article, we prove Theorem \ref{thm_principal}.
\subsection{Case $i \neq j$}

Let's first show (\ref{thm_ij}), which is easier. Fix $0 \leq i \leq g$. For simplicity, but without loss of generality, we assume that $j=0$.

Proposition \ref{theta} (iv) shows that
$$\Phi(e_i, e_0) = \text{lim } \lambda'(\tau(z))^2 \cdot \frac{\theta(\xi_{c'}^{(0)}, \xi_c^{(0)}, z')}{\theta(\xi_{c'}^{(0)}, \xi_c^{(0)}, z)} \text{ .}$$ 
Recall that, by definition, $\xi_c^{(0)}=z_0 \in A$.
For each $\gamma \in \Gamma$, the corresponding term in the infinite product defining the above expression is:
$$\frac{(z' - \gamma(\xi_{c'}^{(0)}) )\cdot ( z - \gamma(z_0) )}{(z'-\gamma(z_0) )\cdot (z-\gamma(\xi_{c'}^{(0)}) )}$$
\begin{lemma}\label{H' identity}
The map $\lambda' \circ  \tau:\mathfrak{H}_{\Gamma} \rightarrow \mathbf{P}^1_{K}$ stabilizes $A$ and $A \cap U$ where $U$ is the closed unit disk. Furthermore, the restriction of $\lambda' \circ \tau$ to $A \cap U$ is the identity modulo $p$.
\end{lemma}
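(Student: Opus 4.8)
The plan is to exploit the modular interpretation of the reduction map, combined with the geometry worked out in Lemma \ref{normalization}. First I would recall that $\lambda' = \lambda \circ w_p$, and that $\tau: \mathfrak{H}_\Gamma \to \mathfrak{X}_K^{an}$ together with the reduction map $\rho:\mathfrak{H}_\Gamma \to \mathcal{T}_\Gamma$ is compatible with the reduction map $\mathfrak{X}_K^{an} \to \mathfrak{X}_k$ and the identification $\Delta \simeq \mathcal{T}_\Gamma/\Gamma$. The affinoid $A = \rho^{-1}(\tilde v_c)$ reduces to an open subset of the component of $\mathfrak{X}_k$ indexed by $v_c$, namely the component classifying $2p$-gons / elliptic curves with an \'etale order-$p$ subgroup and $\Gamma(2)$-structure; under $w_p$ this component is carried to the multiplicative component, which is precisely the copy of $M_{\Gamma(2)}\otimes k \simeq \mathbf{P}^1_k$ on which $\lambda$ is the Hauptmodul. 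So the first key step is: $w_p$ maps $A$ into $A' = \rho^{-1}(\tilde v_{c'})$ (this is the statement, already used implicitly, that $w_p$ exchanges $\tilde v_c$ and $\tilde v_{c'}$ up to the chosen normalization), and therefore $\lambda' \circ \tau = \lambda \circ w_p \circ \tau$ is regular on $A$ with image the reduction of the multiplicative component; since that component is all of $\mathbf{P}^1_k$, the map $\lambda' \circ \tau$ restricted to $A$ is, modulo $p$, the standard reduction $A \to \mathbf{G}_m \subset \mathbf{P}^1_k$ followed by an automorphism of $\mathbf{P}^1_k$. By Lemma \ref{normalization}(i) the standard reduction on $A$ already gives the identification $A \hookrightarrow \mathbf{P}^1_K \twoheadrightarrow \mathbf{P}^1_k$ with the residue disks $B_i$ reducing to $\lambda(e_i)^p$; composing with $w_p$ (which is the Frobenius on supersingular points, hence sends $\lambda(e_i)^p \mapsto \lambda(e_i)$ back) pins down the automorphism to be trivial modulo $p$.

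Concretely, the steps in order would be: (1) show $w_p(A) \subseteq A'$ and hence $\lambda'\circ\tau$ has no pole on $A$ except above the cusp $\xi_c$ (which does not lie in $A\cap U$ — in fact $\lambda'\circ\tau$ has a simple pole at $z_0$, as noted after \eqref{def_Q_2}, because $z_0$ reduces to the cusp $[c']$ on the multiplicative component where $\lambda = \infty$); (2) identify, via Proposition \ref{red1} and the standard reduction $\mathbf{P}^1_K \to \mathbf{P}^1_k$, the reduction of $\lambda'\circ\tau|_A$ with a rational function of degree one on $\mathbf{P}^1_k$, i.e.\ a M\"obius transformation composed with the standard reduction; (3) determine this M\"obius transformation by evaluating on enough points whose reductions are known — the supersingular points $\tilde e_i$ reducing to $\lambda(e_i)$ after $w_p$, and the cusps — and conclude it is the identity, so that $\lambda'\circ\tau$ stabilizes $A$ and is the identity modulo $p$; (4) since $A\cap U$ is exactly the part of $A$ lying in the closed unit disk $U$ and the standard reduction sends it into the affine line, $\lambda'\circ\tau$ maps $A\cap U$ into $A\cap U$ (the locus where the reduction is finite) and is the identity there modulo $p$.

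The main obstacle I expect is step (1)–(3) done carefully: one must verify that the particular lift $w_p$ of the Atkin--Lehner involution (chosen modulo $\Gamma$ via the class in $N(\Gamma)/\Gamma$) really does carry the normalized fundamental domain so that $\tilde v_c \mapsto \tilde v_{c'}$ and the residue disk reducing to $\lambda(e_i)^p$ maps to the residue disk reducing to $\lambda(e_i)$, rather than to some $\Gamma$-translate or a disk reducing to $\lambda(e_j)^p$ for $j\neq i$. This is essentially bookkeeping with the orientation-reversing involution on $\mathcal{T}_\Gamma$ and the action of Frobenius on supersingular points (via $w_p(E_i) = E_i^{(p)}$), but it is where the normalization of $\Gamma$ in Lemma \ref{normalization} and the identification of $\Gamma^{ab}$ with $\Z[S]^0$ must be used consistently; the ``identity modulo $p$'' assertion is then what forces the definition \eqref{def_Q_2} to produce the value $(\lambda(e_i)-\lambda(e_j))^{p+1}$ rather than merely something equivalent to it up to an $\mathbf{F}_p^\times$-scalar. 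Everything else (continuity of $\rho$, the annulus structure of the $c_i$, completeness of $K$) is available from Propositions \ref{red1}, \ref{red2} and Lemma \ref{normalization}.
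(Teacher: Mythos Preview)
Your proposal is correct and rests on the same idea as the paper's proof: the reduction of $\tau|_A$ to the special fiber of $\mathfrak{X}$ lands in the \'etale component, and by the normalization of $\Gamma$ (Lemma~\ref{normalization}(i)) this coincides with the naive reduction $A \subset \mathbf{P}^1_K \to \mathbf{P}^1_k$, from which the assertion about $\lambda'\circ\tau$ follows.

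The paper's argument is considerably shorter because it avoids your step (1) entirely. You propose to lift $w_p$ to an element of $N(\Gamma)/\Gamma$ acting on $\mathfrak{H}_\Gamma$, check that it carries $A$ to $A'$, and then track residue disks through this lift; this is where you yourself flag the main bookkeeping obstacle. The paper never lifts $w_p$: it treats $\lambda' = \lambda\circ w_p$ purely as a morphism $\mathfrak{X}\to\mathbf{P}^1$ and reads off its reduction on the \'etale component directly. Since the normalization was chosen precisely so that the supersingular point $e_i$ sits at $\lambda(e_i)^p$ under the standard reduction, and since $\lambda'(e_i)=\lambda(E_i^{(p)})=\lambda(e_i)^p$, the identification of the \'etale component with $\mathbf{P}^1_k$ via $\lambda'$ is already the one used in Lemma~\ref{normalization}. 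Your steps (2)--(3), pinning down a M\"obius transformation by evaluating at supersingular points and cusps, are thus a re-verification of what the normalization lemma has already built in. (A small slip: you write that $w_p$ ``sends $\lambda(e_i)^p\mapsto\lambda(e_i)$ back''; Frobenius goes the other way, though over $k=\mathbf{F}_{p^2}$ the two directions coincide, so your conclusion is unaffected.) Both routes work; the paper's is cleaner because it keeps $w_p$ on $\mathfrak{X}$ rather than on $\mathfrak{H}_\Gamma$.
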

\begin{proof}
We have a map $A \rightarrow \mathbf{P}^1_k - S$ given by composing the uniformization $\tau$ with the reduction on the special fiber $\mathfrak{X}_k$. This map coincides with the naive reduction of $A$ to $\mathbf{P}^1_k$ induced by the standard reduction $\mathbf{P}^1_K \rightarrow \mathbf{P}^1_k$. Thus, by the lemma of normalization of $\Gamma$, $\lambda' \circ \tau$ stabilizes $A$ and is the identity modulo $p$. 
\end{proof}
Since $\lambda'(\xi_c)=\infty$, $\xi_{c}^{(0)} = z_0 \in A$ reduces to $\infty$ modulo $p$, \textit{i.e.} is not $p$-integral as an element of $K$. Since $z$ is near $z_0$, $z$ reduces to $\infty$ modulo $p$. 

\subsubsection{Case $\gamma=1$}
If $\gamma=1$, $$\frac{(z' - \gamma(\xi_{c'}^{(0)}) )\cdot ( z - \gamma(z_0) )}{(z'-\gamma(z_0) )\cdot (z-\gamma(\xi_{c'}^{(0)}) )}$$ is equivalent, modulo principal units, to 
$$\frac{(\lambda(e_i)^p-\lambda(e_0)^p)\cdot (z-z_0)}{-z_0^2} = \frac{(\lambda(e_i)-\lambda(e_0))^p\cdot (z-z_0)}{-z_0^2}\text{ .}$$
Note that this equality mixes terms in $k^{\times}$ and $K^{\times}$. In fact, this equality (and all the similar equalities below) is viewed in $K^{\times}/U_1(K)$.

\subsubsection{Case $\gamma \neq 1$}
Let $\gamma \neq 1 \in \Gamma$, written in reduced form as $\alpha_{i_1}^{\epsilon_1}\alpha_{i_2}^{\epsilon_2} \cdot \cdot \cdot$ with $i_k \in \{1,...,g\}$ and $\epsilon_{i_k} \in \Z \backslash \{0\}$. Then for all $u$ in the fundamental domain $$D=\mathbf{P}^1_K-\underset{1\leq i \leq g}{\coprod} B_i \cup \underset{1\leq i \leq g}{\coprod} C_i \text{ ,}$$ we have $\gamma(u) \in B_{i_1}$ if $\epsilon_1 < 0$ and $\gamma(u) \in C_{i_1}$ if $\epsilon_1 > 0$. Note that $\gamma(z_0)$ is not in $A$, so it is not in the residue disk containing $\infty$ (\textit{i.e.} the residue disk containing $z_0$). Hence $\gamma(z_0)$ does not reduce to $\infty$ modulo $p$.

In particular, $\gamma(D) \subset B_{i_1}$ or $C_{i_1}$. Thus $$\frac{z - \gamma(z_0) }{z-\gamma(\xi_{c'}^{(0)}) }$$ is a principal unit.

Assume first that $\alpha_j^{k}$ with $i \ne j $ or $\alpha_i ^n$ with $n>0$ occurs as the first term in the reduced form of $\gamma$. In that case, $\gamma(\xi_{c'}^{(0)})$ and $\gamma(z_0)$ are both in $B_j$, $C_j$ or $C_i$ and $z' \in B_i$, so $$\frac{z' - \gamma(\xi_{c'}^{(0)}) }{z'-\gamma(z_0) }$$ is a principal unit. So in this case, the factor associated to $\gamma$ in the definition of $\phi(e_i,e_0)$ is a principal unit.

Thus, in what follows, assume that $\gamma=\alpha_i^{-1}\cdot \gamma'$, where $\gamma'$ is such that its reduced expression in terms of the $\alpha_k$'s does not begin with $\alpha_i$. By invariance of the cross-ratio by the action $\text{PGL}_2(K)$ (here by the action of $\alpha_i$), we have:
$$\frac{(z' - \gamma(\xi_{c'}^{(0)}) )\cdot ( z - \gamma(z_0) )}{(z'-\gamma(z_0) )\cdot (z-\gamma(\xi_{c'}^{(0)}) )} = \frac{(\alpha_i(z') - \gamma'(\xi_{c'}^{(0)}) )\cdot (\alpha_i(z) -\gamma'(z_0) )}{(\alpha_i(z')-\gamma'(z_0) )\cdot (\alpha_i(z)-\gamma'(\xi_{c'}^{(0)}) )}\text{ .}$$

\textit{First subcase.} We first assume that the decomposition of $\gamma'$ does not begin by $\alpha_i^{-1}$.

We distinguish the cases $\lambda(e_0) \in \mathbf{F}_p$ and $\lambda(e_0) \in k \backslash \mathbf{F}_p$. 

\subsubsection*{Case $\lambda(e_0) \in \mathbf{F}_p$}
Assume first that $\lambda(e_0) \in \mathbf{F}_p$.
We choose an element of $N(\Gamma)$ (normalizer of $\Gamma$ in $\text{PGL}_2(K)$) inducing $w_p$ on $\mathfrak{X}_K^{an}$ and such that the induced automorphism on $\mathcal{T}_{\Gamma}$ (still denoted by $w_p$) fixes (by reversing the vertices) the edge $\tilde{e}_0$ between $\tilde{v}_c$ and $\tilde{v}_{c'}$. Then, by definition, $w_p(A)=A'$ and $w_p(C_i')=B_{i'}$ where $i' \in \{0, ..., g\}$ is such that $\lambda(e_{i'}) = \lambda(e_i)^p$. Since $w_p^2$ fixes the edge $\tilde{e_0}$, $w_p^2=1$ (the stabilizer of $\tilde{e_0}$ is compact and $\Gamma$ is discrete, so $w_p^2$ is torsion but $\Gamma$ is torsion-free). We have $w_p(\tilde{v}_{c'}) = \tilde{v}_c$, hence $w_p(\tilde{v}_c) = \tilde{v}_{c'}$. Therefore $w_p(z_0) = \xi_{c'}^{(0)}$ and $w_p(\xi_{c'}^{(0)}) = z_0$. We also have $w_p(\alpha_i(z')) = z$ (both terms are near $z_0$ and $\lambda'(\tau(w_p(\alpha_i(z'))) = \lambda(\tau(\alpha_i(z'))) = \lambda(\tau(z')) = \lambda'(\tau(z))$). By invariance of the cross-ratio by the action of $\text{PGL}_2(K)$ (hence by $w_p$) and if $\gamma' = 1$, we have:
$$\frac{(\alpha_i(z') - \xi_{c'}^{(0)} )\cdot (\alpha_i(z) - z_0 )}{(\alpha_i(z')-z_0 )\cdot (\alpha_i(z)-\xi_{c'}^{(0)}) } =\frac{(z - z_0 )\cdot (w_p(\alpha_i(z)) -\xi_{c'}^{(0)} )}{(z-\xi_{c'}^{(0)})\cdot (w_p(\alpha_i(z))-z_0 )} \text{ .}$$
This last term equals, modulo principal units, to: 
$$ \frac{(z-z_0)}{-z_0^2}\cdot (\lambda(e_{i})-\lambda(e_0)) \text{ .}$$

Otherwise, $\gamma' \neq 1$ and we get similarly:
$$\frac{(\alpha_i(z') - \gamma '( \xi_{c'}^{(0)} ))\cdot (\alpha_i(z) - \gamma '( z_0))}{(\alpha_i(z')-\gamma '( z_0) )\cdot (\alpha_i(z)-\gamma ' (\xi_{c'}^{(0)})) } = \frac{(z- w_p(\gamma'(\xi_{c'}^{(0)})) )\cdot (w_p(\alpha_i( z)) -w_p (\gamma' (z_0) ))}{(z- w_p(\gamma' (z_0)) )\cdot (w_p(\alpha_i (z))- w_p(\gamma'(\xi_{c'}^{(0)}))) } \text{ .}$$
If the reduced form of $\gamma'$ is $\alpha_{j}^k \alpha_{i_2}^{\epsilon_2} \cdot \cdot \cdot$ with $i \ne j$ and $k \geq 1$, then the right-hand side of the equality is seen to be a principal unit, otherwise the reduced form of $\gamma'$ is $\alpha_{j}^{-k} \alpha_{i_2}^{\epsilon_2} \cdot \cdot \cdot$ with $k \geq 1$, so the left-hand side is seen to be a principal unit.

To summarize, in the case $\lambda(e_0) \in \mathbf{F}_p$, if $\gamma' = 1$, the factor associated to $\gamma$ is 
$$ \frac{(z-z_0)}{-z_0^2}\cdot (\lambda(e_{i})-\lambda(e_0)) \text{ .}$$
Else, if $\gamma' \neq 1$, the factor associated to $\gamma$ is a principal unit.

\subsubsection*{Case $\lambda(e_0) \in k \backslash \mathbf{F}_p$}
Assume now that $\lambda(e_0) \in k \backslash \mathbf{F}_p$, and without loss of generality that $\lambda(e_0)^p=\lambda(e_g)$. In that case, we choose $w_p$ such that $w_p(\tilde{e_0}) = \tilde{e_g}$. Since $w_p^2 \in \Gamma$, we have $w_p^2 = \alpha_g$ (this follows from $w_p^2(\tilde{v}_c)=\tilde{v_g} = \alpha_g(\tilde{v}_c)$ and the fact that the stabilizer of a vertex in $\mathcal{T}_{\Gamma}$ is trivial). With the same notation as in the previous case, we have $w_p(B_{i'})=C_i'$, $w_p(z_0) = \xi_{c'}^{(0)}$, $w_p^{-1}(z_0) \in B_g$ et $w_p^{-1}(\alpha_i(z')) = z$. We again use the invariance of the cross-ratio by $w_p^{-1}$ when $\gamma' = 1$:
$$\frac{(\alpha_i(z') - \xi_{c'}^{(0)} )\cdot (\alpha_i(z) -z_0 )}{(\alpha_i(z')-z_0 )\cdot (\alpha_i(z)-\xi_{c'}^{(0)} )} =\frac{(z - z_0 )\cdot (w_p^{-1}(\alpha_i(z)) -\alpha_g^{-1}(\xi_{c'}^{(0)}) )}{(z-\alpha_g^{-1}(\xi_{c'}^{(0)}))\cdot (w_p^{-1}(\alpha_i(z))-z_0 )} \text{ .}$$
This last term equals, modulo principal units, to:
$$ \frac{(z-z_0)}{-z_0^2}\cdot (\lambda(e_{i})-\lambda(e_0))$$

Otherwise, $\gamma' \neq 1$ and by using the invariance of the cross-ratio by the action of $\text{PGL}_2(K)$ (hence by $w_p^{-1}$), we get:
$$\frac{(\alpha_i(z') - \gamma '( \xi_{c'}^{(0)}) )\cdot (\alpha_i(z) - \gamma ' (z_0) )}{(\alpha_i(z')-\gamma '( z_0 ))\cdot (\alpha_i(z)-\gamma ' (\xi_{c'}^{(0)})) } = \frac{(z- w_p^{-1}(\gamma'(\xi_{c'}^{(0)})) )\cdot (w_p^{-1}(\alpha_i (z)) -w_p^{-1} (\gamma' (z_0) ))}{(z- w_p^{-1}(\gamma' (z_0)) )\cdot (w_p^{-1}(\alpha_i (z))- w_p^{-1}(\gamma'(\xi_{c'}^{(0)}))) } \text{ .}$$
Note that $w_p^{-1} \alpha_i z \in B_{i'}$. Thus, if the reduced form of $\gamma'$ is $\alpha_{j}^k \alpha_{i_2}^{\epsilon_2} \cdot \cdot \cdot$ with $i \ne j$ and $k \geq 1$, then the right-hand side of the equality is seen to be a principal unit, otherwise the reduced form of $\gamma'$ is $\alpha_{j}^{-k} \alpha_{i_2}^{\epsilon_2} \cdot \cdot \cdot$ with  $k \leq -1$, so the left-hand side is seen to be a principal unit.

To summarize, in the case $\lambda(e_0) \in k \backslash \mathbf{F}_p$, if $\gamma' = 1$, the factor associated to $\gamma$ is 
$$ \frac{(z-z_0)}{-z_0^2}\cdot (\lambda(e_{i})-\lambda(e_0)) \text{ .}$$
Else, if $\gamma' \neq 1$, the factor associated to $\gamma$ is a principal unit.

\textit{Second subcase. } If the reduced form of $\gamma$ is $\alpha_i^{-n} \gamma'$ with $n\geq 2$, then by using again the invariance of the cross-ratio by the action of $\text{PGL}_2(K)$ (hence by $\alpha_i^n$), we get :
$$\frac{(z' - \gamma(\xi_{c'}^{(0)}) )\cdot ( z - \gamma(z_0) )}{(z'-\gamma(z_0) )\cdot (z-\gamma(\xi_{c'}^{(0)}) )} = \frac{(\alpha^n_i(z') - \gamma'(\xi_{c'}^{(0)}) )\cdot (\alpha_i^n(z) -\gamma'(z_0) )}{(\alpha_i^n(z')-\gamma'(z_0) )\cdot (\alpha_i^n(z)-\gamma'(\xi_{c'}^{(0)}) )}\text{ .}$$

In the case where the first term of $\gamma'$ is $\alpha_j^k$ with $k\leq -1$, a direct computation shows that the right-hand side is a principal unit (since $\alpha_i^n z' \in B_0$). Otherwise, $k\geq 1$ or $\gamma'=1$ and by applying $w_p$ (or $w_p^{-1}$ is the case where $\lambda(e_0) \in k \backslash \mathbf{F}_p$) to the left-hand side above we find a principal unit (in the case where $\lambda(e_0) \in k \backslash \mathbf{F}_p$, $w_p^{-1}(C_g)\subset B_0$). 

Thus in this second subcase, the factor associated to $\gamma$ is a principal unit.

To summarize the various cases, there are exactly two elements $\gamma \in \Gamma$ for which the factor associated to $\gamma$ is not a principal unit, namely $\gamma=1$ and $\gamma = \alpha_i^{-1}$. In the first case, the factor is equivalent modulo principal units to 
$$ \frac{(z-z_0)}{-z_0^2}\cdot (\lambda(e_{i})-\lambda(e_0))^p$$
and in the second case to 
$$ \frac{(z-z_0)}{-z_0^2}\cdot (\lambda(e_{i})-\lambda(e_0)) \text{ .}$$
Thus, in to conclude the proof of (\ref{thm_ij}), it suffices to show: 
\begin{prop}
We have $$ \emph{lim } \frac{\left( \lambda' \circ \tau(z) \right) \cdot (z_0-z)}{z_0^2} = 1\text{ .}$$
\end{prop}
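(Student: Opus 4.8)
The plan is to identify the limit with minus the residue of $\lambda'\circ\tau$ at $z_0$, divided by $z_0^2$, and then estimate that residue $p$-adically. Since $\lambda'\circ\tau$ has a simple pole at $z_0$, writing $\lambda'(\tau(z))=\frac{R}{z-z_0}+O(1)$ near $z_0$ gives $\lim_{z\to z_0}\lambda'(\tau(z))\cdot\frac{z_0-z}{z_0^2}=\frac{-R}{z_0^2}$; so, recalling that all the equalities in this section are understood in $K^\times/U_1(K)$, the proposition is equivalent to the estimate $|R+z_0^2|\le |p|\cdot|z_0|^2$, i.e. to $-R/z_0^2\in U_1(K)$.

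The first (and only delicate) step is to upgrade Lemma \ref{H' identity}, which controls $\lambda'\circ\tau$ on $A\cap U$, to a statement valid near $z_0$; note that $z_0\notin U$, since $\lambda'(\tau(z_0))=\lambda'(\xi_c)=\infty$ forces $z_0$ to reduce to $\infty$, so that $w_0:=1/z_0$ has $|w_0|<1$. The point is that $\lambda'\circ\tau$ is a self-map of the affinoid $A=\mathbf{P}^1_K-\coprod_i B_i$ (it stabilises $A$ by Lemma \ref{H' identity}), hence induces an endomorphism of the canonical reduction $\widetilde A=\mathbf{P}^1_k-\{\lambda(e_i)^p\}$ (by Lemma \ref{normalization}(i)); by Lemma \ref{H' identity} this endomorphism restricts to the identity on the schematically dense open $\widetilde{A\cap U}=\mathbf{A}^1_k-\{\lambda(e_i)^p\}$, and a morphism from a reduced scheme to a separated one is determined by its restriction to a dense open, so it is the identity of $\widetilde A$. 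Consequently $\lambda'\circ\tau$ maps the sub-affinoid $\bar D:=\{|z|\ge 1\}-\coprod_i B_i$, which is the tube over $\widetilde A-\{0\}$ and contains $z_0$, into itself and reduces to the identity there.

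The second step transfers this to the coordinate $w=1/z$. In this coordinate $\bar D$ is the closed unit disc minus the open discs $B_i'$ reducing to $\lambda(e_i)^{-p}$, its ring of power‑bounded functions is $\mathcal{O}_K\langle w\rangle[1/\tilde P]$ where $\tilde P(w)=\prod_i(\lambda(e_i)^p w-1)$ is a unit of absolute value $1$ on $\bar D$, and its canonical reduction is the quotient of this ring by $p$. The endomorphism $\lambda'\circ\tau$ of $\bar D$ corresponds to $\sigma(w):=1/\lambda'(\tau(1/w))$, and the fact that it reduces to the identity says precisely that $\sigma(w)-w\in p\,\mathcal{O}_K\langle w\rangle[1/\tilde P]$; since formal differentiation preserves this ring, one obtains
\[
\Bigl|\tfrac{1}{\lambda'(\tau(z))}-\tfrac1z\Bigr|\le |p|,\qquad \Bigl|\tfrac{d}{dw}\bigl(\sigma(w)-w\bigr)\Bigr|\le |p|\qquad\text{on }\bar D .
\]

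Finally I would evaluate at $w_0\in\bar D$. From $\lambda'(\tau(z))=\frac{R}{z-z_0}+O(1)$ one computes $\sigma(w)=-\frac{w-w_0}{R\,w_0^2}+O((w-w_0)^2)$, whence $\frac{d}{dw}\bigl(\sigma(w)-w\bigr)\big|_{w_0}=-\frac{1}{R\,w_0^2}-1=-\frac{z_0^2+R}{R}$; the derivative bound then gives $|z_0^2+R|\le |p|\cdot|R|$. As $|p|<1$, the ultrametric inequality forces $|R|=|z_0|^2$, so $|R+z_0^2|\le |p|\cdot|z_0|^2$ and $-R/z_0^2\in 1+p\mathcal{O}_K=U_1(K)$, which is $1$ modulo principal units. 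The main obstacle is thus entirely concentrated in the first step — propagating the congruence of Lemma \ref{H' identity} from $A\cap U$ to the point $z_0\in A\setminus U$ — and is resolved by the reduction‑is‑a‑morphism principle; the rest is a routine residue computation.
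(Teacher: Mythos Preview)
Your argument is correct and is essentially the same as the paper's, which simply cites Lemma~\ref{H' identity} together with \cite[Lemma~2.1]{shalit}; what you have done is unpack the content of de Shalit's lemma in this setting. Your density argument in the first step is fine (and in fact the proof of Lemma~\ref{H' identity} already shows the reduction of $\lambda'\circ\tau$ is the identity on all of $\tilde A$, not just on $\widetilde{A\cap U}$), and your residue computation in the $w$-coordinate via the derivative bound $|\sigma'(w_0)-1|\le|p|$ is exactly the mechanism behind de Shalit's Lemma~2.1.
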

\begin{proof}
This follows from Lemma \ref{H' identity} and \cite{shalit} Lemma $2.1$.
\end{proof}

\subsection{Case $i=j$}

To conclude this article, we prove the formula (\ref{thm_i}) of Theorem~\ref{thm_principal}. Let $d = \text{gcd(}p-1, 12\text{)}$. Let $X = M_{\Gamma_0(p) \cap \Gamma(2)}$ over $\mathbf{Z}[\frac{1}{2}]$.

Let $\mu$ in the function field of $X_\Q=M_{\Gamma_0(p)\cap \Gamma(2)} \otimes \Q$ as in Appendix section \ref{section_mu}. We see $\mu$ as a meromorphic function on $\mathfrak{X}_K^{an}$ (or on $\mathfrak{H}_{\Gamma}$ via the uniformization $\tau : \mathfrak{H}_{\Gamma} \rightarrow \mathfrak{X}_K$, as the context will make clear). It is shown in the appendix that the divisor of $\mu$ is $6\cdot \frac{p-1}{d} \cdot \left( (\xi_{c'}) - (\xi_{c})\right)$.

For $z \in \mathfrak{H}_{\Gamma}$, let
$$\nu(z) = \left( \prod_{0 \leq \ell \leq g} \theta(\xi_{c'}^{(\ell)},z_0,z) \right)^{\frac{12}{d}} \text{ .}$$
\begin{prop}\label{mu_nu}
The two functions $\nu$ et $\mu$ are proportionals.
\end{prop}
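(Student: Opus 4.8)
The plan is to show that $\nu$ and $\mu$ are both meromorphic functions on $\mathfrak{X}_K^{an}$ (equivalently $\Gamma$-invariant meromorphic functions on $\mathfrak{H}_\Gamma$) with the same divisor, and then invoke the fact that on a proper rigid curve a meromorphic function is determined by its divisor up to a multiplicative constant. Since it is already recorded in the appendix that $\mathrm{div}(\mu) = 6 \cdot \frac{p-1}{d}\big((\xi_{c'}) - (\xi_c)\big)$, the entire content of the proposition is to compute $\mathrm{div}(\nu)$ and check it equals this. Concretely, $\nu(z) = \big(\prod_{0 \le \ell \le g} \theta(\xi_{c'}^{(\ell)}, z_0, z)\big)^{12/d}$, so by the defining infinite product of $\theta$, the function $\prod_\ell \theta(\xi_{c'}^{(\ell)}, z_0, z)$ on $\mathfrak{H}_\Gamma$ has divisor $\sum_\ell \sum_{\gamma \in \Gamma} \big((\gamma \xi_{c'}^{(\ell)}) - (\gamma z_0)\big)$, i.e. it is a theta function attached to the degree-zero divisor $\sum_{0 \le \ell \le g}\big((\xi_{c'}^{(\ell)}) - (z_0)\big)$ on $\mathfrak{H}_\Gamma$.

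\emph{First I would} check that $\nu$ descends to $\mathfrak{X}_K^{an}$: by Proposition~\ref{theta}(i), $\theta(\xi_{c'}^{(\ell)}, z_0, \alpha z) = c(\xi_{c'}^{(\ell)}, z_0, \alpha)^{-1}\theta(\xi_{c'}^{(\ell)}, z_0, z)$ with $c$ a homomorphism in $\alpha$, so $\nu$ is $\Gamma$-invariant provided the total automorphy factor $\prod_\ell c(\xi_{c'}^{(\ell)}, z_0, \alpha)^{12/d}$ is trivial; equivalently, by the criterion recalled after Proposition~\ref{theta}, provided $\tau_*\big(\sum_\ell ((\xi_{c'}^{(\ell)}) - (z_0))\big) = 0$ in the Jacobian, i.e. the pushforward divisor on $\mathfrak{X}_K^{an}$ is principal. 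Now each pair $(\xi_c^{(\ell)}, \xi_{c'}^{(\ell)})$ reduces modulo $\Gamma$ to $(\xi_c, \xi_{c'})$ by construction, and $\xi_c^{(\ell)} = z_0$ for all $\ell$; hence $\tau_*\big(\sum_{0 \le \ell \le g}((\xi_{c'}^{(\ell)}) - (z_0))\big) = (g+1)\big((\xi_{c'}) - (\xi_c)\big)$ as a divisor on $\mathfrak{X}_K^{an}$. So I must show $(g+1)\big((\xi_{c'}) - (\xi_c)\big)$ is principal on $\mathfrak{X}_K$ — but this is exactly (a multiple of) the divisor of $\mu$ up to the scaling $6\frac{p-1}{d}$ versus $g+1 = \frac{p-1}{2}$: indeed $\frac{12}{d}(g+1) = \frac{12}{d}\cdot\frac{p-1}{2} = 6\cdot\frac{p-1}{d} \cdot \frac{2}{2}$... let me be careful: $\tfrac{12}{d}\cdot\tfrac{p-1}{2} = \tfrac{6(p-1)}{d}$, which matches $\mathrm{div}(\mu)$ exactly. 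So $\mathrm{div}(\nu) = \tfrac{12}{d}\,\tau_*\big(\sum_\ell((\xi_{c'}^{(\ell)}) - (z_0))\big) = \tfrac{6(p-1)}{d}\big((\xi_{c'}) - (\xi_c)\big) = \mathrm{div}(\mu)$ as divisors on $\mathfrak{X}_K^{an}$; in particular this divisor is principal (being $\mathrm{div}(\mu)$), which retroactively justifies that $\nu$ descends.

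\emph{Then} the proof concludes: $\mu/\nu$ (or rather $\mu$ and $\nu$ pulled back to $\mathfrak{H}_\Gamma$) is a $\Gamma$-invariant rigid meromorphic function on $\mathfrak{H}_\Gamma$ with trivial divisor, hence descends to a global meromorphic function on the proper rigid curve $\mathfrak{X}_K^{an}$ with no zeros or poles, hence is a nonzero constant — this is where I rely on properness of $\mathfrak{X}_K^{an}$ (equivalently the rigid GAGA / maximum modulus principle for $\mathfrak{X}_K$).

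\emph{The main obstacle} I anticipate is bookkeeping the automorphy factors correctly and making sure the "descends to $\mathfrak{X}_K^{an}$" step for $\nu$ is not circular: the cleanest route is to first compute the divisor of the $\Gamma$-periodic divisor $\sum_{\gamma,\ell}((\gamma\xi_{c'}^{(\ell)}) - (\gamma z_0))$ directly from the product formula, observe its $\Gamma$-orbit structure gives a well-defined divisor $(g+1)((\xi_{c'})-(\xi_c))$ downstairs, note this equals $\frac{d}{12}\,\mathrm{div}(\mu)$ and is therefore principal, and only then conclude that the theta product (suitably powered) is $\Gamma$-invariant and equal to $\mu$ up to scalar. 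A secondary subtlety is that $\theta(\xi_{c'}^{(\ell)}, z_0, z)$ individually is only defined up to the choice of representatives $\xi_{c'}^{(\ell)}, z_0$ in their $\Gamma$-orbits and up to conjugation of $\Gamma$, but the product over $\ell$ of the $12/d$-th powers kills this ambiguity by the degree-zero and principality conditions, so $\nu$ is canonical — this should be remarked explicitly.
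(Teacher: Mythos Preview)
There is a genuine gap in the step where you claim $\nu$ descends to $\mathfrak{X}_K^{an}$. You invoke the criterion stated after Proposition~\ref{theta}, but you have misread it in two ways. First, ``$\theta(D,\cdot)$ is entire'' there means the function has no poles on $\mathfrak{H}_\Gamma$ (the $\Gamma$-translates of the points in $D$ cancel), not that it is $\Gamma$-invariant; the functions $u_\alpha$ are entire with no zeros yet carry the nontrivial automorphy factor $\Phi(\alpha,\cdot)$. Second, ``$\tau_*(D)=0$'' there means the pushforward is the zero \emph{divisor}, not merely zero in the Jacobian. The correct link between automorphy factors and the Jacobian is that $\tau_*(D)$ is principal iff the homomorphism $\alpha\mapsto c(D,\alpha)$ lies in the period lattice $\{\Phi(\cdot,\beta):\beta\in\Gamma^{ab}\}$, and this lattice is strictly larger than the trivial homomorphism. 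So from $\mathrm{div}(\nu)=\mathrm{div}(\mu\circ\tau)$ on $\mathfrak{H}_\Gamma$ you only obtain that $\nu/(\mu\circ\tau)$ is a nowhere-vanishing holomorphic function on $\mathfrak{H}_\Gamma$; modulo constants such functions form a group of rank $g$ (generated by the $u_\alpha$), and properness of $\mathfrak{X}_K^{an}$ does not help because $\nu$ is not yet known to descend. Concretely, the automorphy factor of $\nu$ is $\alpha\mapsto\Phi(\alpha,\hat{e})^{12/d}$, and you have given no independent reason why this should vanish.

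This is precisely why the paper takes a different route. After observing that $\nu/\mu$ is entire without zeros on $\mathfrak{H}_\Gamma$, it invokes de~Shalit's criterion (Corollary~2.4 of \cite{shalit}): such a function is constant iff its degree on every oriented annulus $\rho^{-1}(e)$ vanishes. That $\deg_\epsilon(\nu)=\tfrac{12}{d}$ is immediate from the product defining $\theta$. The substantive step is showing $\deg_\epsilon(\mu)=\tfrac{12}{d}$, which the paper does by proving $|\mu|=|p|^{-12/d}$ on the affinoid $F=\tau(A)$ and $|\mu|=1$ on $F'=\tau(A')$; this requires the moduli interpretation of $F$ (ordinary reduction with \'etale level-$p$ subgroup), a N\'eron-differential argument controlling $|\Delta(pz)/\Delta(z)|$, and the functional equation $\mu\circ w_p=p^{-12/d}\mu^{-1}$. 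The annulus-degree computation is thus not a formality but the heart of the proof, and it is exactly what rules out the possibility $\nu=c\cdot u_\beta\cdot(\mu\circ\tau)$ with $\beta\neq 0$.
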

Let us show how we conclude the proof of Theorem~\ref{thm_principal} before we prove the above proposition. Let $z$ and $z'$ be as in the definition $\Phi$. 
By definition, we have: 
$$\bar{\Phi}(e_i, \hat{e})^{\frac{12}{d}}= \text{lim } \left( \lambda' \circ \tau(z)\right)^{\frac{12}{d}\cdot 2\cdot\frac{p-1}{2}} \cdot \frac{\nu(z')}{\nu(z)} \text{ .}$$
Proposition \ref{mu_nu} shows that: 
\begin{equation}
\bar{\Phi}(e_i, \hat{e})^{\frac{12}{d}} = \text{lim } \left( \lambda' \circ \tau(z)\right)^{\frac{12}{d}\cdot 2 \cdot \frac{p-1}{2}} \cdot \frac{\mu(z')}{\mu(z)} \text{ .}
\label{limite_algebrique}
\end{equation}
The function $\mu$ is algebraic in $\tau(z)$, and since $\tau(z') = w_p(\tau(z))$, $\frac{\mu(z')}{\mu(z)}$ is algebraic in $\tau(z)$. This is also the case for $\lambda'$. Therefore, in (\ref{limite_algebrique}), it suffices to do a Fourier expansion computation at the cusp $\xi_c$ of $X$ (recall that $\lambda'(\xi_c) = \lambda(\xi_{c'}) = \infty$, by section \ref{section_cusps_N=2} of Appendix)

We know that the Fourier expansion of $\lambda'$ begins with $\frac{-1}{16q_{c}}$, where $q_{c}$ is a local parameter at $\xi_{c}$ (\textit{c.f.} Appendix, section \ref{section_cusps_N=2}). Since $\mu(z') = p^{-\frac{12}{d}} \cdot \mu(z)^{-1}$ by Lemma \ref{functional_equation_mu} and the Fourier expansion of $\mu$ at $\xi_c$ begins with $p^{\frac{-12}{d}} \cdot q_{c}^{-\frac{6(p-1)}{d}}$ by Lemma \ref{Fourier_mu}, the limit we seek is $\left(\left(-\frac{1}{16} \right)^{p-1} \cdot p\right)^{\frac{12}{d}}=p^{\frac{12}{d}}$ modulo principal units. This concludes the proof of Theorem ~\ref{thm_principal}.

Let us now prove Proposition \ref{mu_nu}.

We have seen that both functions have divisor $$\frac{12}{d}\cdot (g+1)\cdot(\Gamma \cdot  \xi_{c'} - \Gamma \cdot \xi_c) =\frac{12}{d}\cdot \frac{p-1}{2}\cdot(\Gamma \cdot  \xi_{c'} - \Gamma\cdot \xi_c) \text{ .}$$ Hence $\frac{\nu}{\mu}$ is entire with no zeroes in $\mathfrak{H}_{\Gamma}$. To conclude, we are going to use the criterion of Corollary $2.4$ of \cite{shalit}. If $\epsilon$ is in an oriented annulus of $\mathfrak{H}_{\Gamma}$ and $f$ is an analytic function on $\mathfrak{H}_{\Gamma}$, the degree $\text{deg}_{\epsilon}(f)$ of the restriction of $f$ to $\epsilon$ was defined by de Shalit in section $2.2$ of \cite{shalit}. It suffices to show that for any edge $e$ of $\mathcal{T}_{\Gamma}$ (with the orientation of $\mathcal{T}_{\Gamma}$ we have fixed before), $\mu$ and $\nu$ have the same degree on the annulus $\rho^{-1}(e)$. Let $\epsilon$ be such an annulus.

It is clear that $\text{deg}_{\epsilon}(\nu) = \frac{12}{d}$. Let $F = \tau(A)$ and $F' = \tau(A')$ be affinoids of $X$. We first show that $\mid \mu(z) \mid =  \mid p \mid^{-\frac{12}{d}}$ for all $z \in F$ and that $\mid \mu(z') \mid =1$ for all $z' \in F'$. Recall that $\mu \circ w_p = p^{\frac{-12}{d}} \cdot \mu^{-1}$ by Lemma \ref{functional_equation_mu}. Therefore it suffices to prove the first norm equality. The affinoid $F$ parametrizes the triples $(E,C,\alpha)$ where $E$ is an elliptic curve with ordinary good reduction at $p$, and $C$ extends to an \'etale group-scheme over some ring of integer. Likewise $F'$ parametrizes triples where $C$ is of multiplicative type. 

If $z \in F$ corresponds to $(E,C,\alpha=(P_1,P_2))$, and $\omega$ is differential on $E$, we let $\omega'$ be the unique differential on $E/C$ whose pullback to $E$ is $\omega$. We then have 
$$\frac{\Delta(pz)}{\Delta(z)} = \frac{\Delta(E/C, p^{-1}\cdot \omega')}{\Delta(E, \omega)} \text{ .}$$
Note that the right hand side is independent of the choice of $\omega$, since if we change $\omega$ by $c\cdot \omega$ then both the numerator and the denominator are multiplied by $c^{12}$.

We now choose $\omega$ to be a N\'eron differential on $E$. Then $\Delta(E, \omega)$ is a unit. Since $C$ extends to an \'etale group scheme over some integer ring, the isogeny $E \rightarrow E/C$ is \'etale and $\omega'$ is also a N\'eron differential on $E/C$. Therefore both $\Delta(E, \omega)$ and $\Delta(E/C, \omega')$ are units, and $\mid u(z) \mid = \mid p \mid^{\frac{-12}{d}}$ where $u$ is defined in Appendix \ref{section_mu}. The complex correspondance $z \mapsto \frac{z+1}{2}$ has a moduli interpretation (see the proof of Lemma \ref{functional_equation_mu}) which shows that the numerator of $\mu$ has norm $\mid p \mid^{\frac{-24}{d}}$ on $F$ and its denominator has norm $ \mid p \mid^{\frac{-12}{d}}$. Thus $\mid u(z) \mid = \mid p \mid^{\frac{-12}{d}}$, as wanted.

Let us finish the proof of Proposition \ref{mu_nu}. Let $f$ be a rigid analytic function on the annulus $\epsilon = \{z, \mid p \mid < z < 1\}$. Then we see, by definition of the degree, that: 
$$\text{deg}_{\epsilon}(f) = \text{lim }_{\mid z \mid \rightarrow \mid p \mid} \text{ord}(f(z)) - \text{lim }_{\mid z \mid \rightarrow 1} \text{ord}(f(z))$$
(where $\text{ord}$ is the $p$-adic valuation normalized by $\text{ord}(p)=1$).
Therefore $\text{deg}_{\epsilon}(\mu)=0-(-\frac{12}{d})=\frac{12}{d}$, which concludes the proof of Proposition \ref{mu_nu}.

\section{Appendix: some basic facts about $M_{\Gamma(N) \cap \Gamma_0(p)}(\mathbf{C})$}

\subsection{Case $N=2$}
\subsubsection{The cusps}\label{section_cusps_N=2}
Let $X = M_{\Gamma_0(p) \cap \Gamma(2)}$ over $\mathbf{Z}[\frac{1}{2}]$. There is an isomorphism 
$$\Gamma_0(p) \cap \Gamma(2) \backslash (\mathcal{H}\cup \mathbf{P}^1(\mathbf{Q}))\simeq X(\mathbf{C})$$
induced by the map sending $z \in \mathcal{H}$ to the triple $(\mathbf{C}/(\mathbf{Z}+z\cdot \mathbf{Z}), <\frac{1}{p}>, (\frac{z}{2}, \frac{1}{2}))$. Let $\epsilon$ be the involution of $X$ defined by
$$\epsilon(E,C, (P,Q)) = (E,C, (-Q, P)) \text{ .}$$
This induces, with the above identification, an automorphism of $\Gamma_0(p) \cap \Gamma(2) \backslash (\mathcal{H}\cup \mathbf{P}^1(\mathbf{Q}))$. The Atkin--Lehner involution on $X(\mathbf{C})$ is induced by the automorphism $z \mapsto \epsilon(\frac{-1}{pz})$ of $\Gamma_0(p) \cap \Gamma(2) \backslash (\mathcal{H}\cup \mathbf{P}^1(\mathbf{Q}))$. Indeed, $\frac{-1}{pz}$ corresponds the triple 
\begin{align*}
(\mathbf{C}/(\mathbf{Z}+ \frac{-1}{pz}\cdot \mathbf{Z}), <\frac{1}{p}>, (\frac{-1}{2pz}, \frac{1}{2})) &= (\mathbf{C}/(z\cdot \mathbf{Z}+ \frac{1}{p}\cdot \mathbf{Z}), <\frac{z}{p}>, (\frac{-1}{2p}, \frac{z}{2}))  \\&=\epsilon(w_p(\mathbf{C}/(\mathbf{Z}+z\cdot \mathbf{Z}), <\frac{1}{p}>, (\frac{z}{2}, \frac{1}{2}))) \text{ .}
\end{align*}

The curve $X(\C)$ has $6 = 2\cdot \text{Card}(\mathbf{P}^1(\mathbf{F}_2))$ cusps, namely $[1]_{\Gamma_0(p) \cap \Gamma(2)}$, $[\frac{1}{2}]_{\Gamma_0(p) \cap \Gamma(2)}$, $[0]_{\Gamma_0(p) \cap \Gamma(2)}$, $[\frac{1}{p}]_{\Gamma_0(p) \cap \Gamma(2)}$, $[\frac{2}{p}]_{\Gamma_0(p) \cap \Gamma(2)}$ and $[\infty]_{\Gamma_0(p) \cap \Gamma(2)}$ (the first three are above the cusp $0$ of $M_{\Gamma_0(p)}(\C)$, and the last three are above $\infty$; $w_p$ exchanges the cusps of the first groups and the cusps of the second group).
The genus of $X$ is $g=\frac{p-3}{2}$, so $X$ has $g+1 = \frac{p-1}{2}$ supersingular points in its special fiber at $p$.
The Hauptmodul $\lambda:X(2) \rightarrow \mathbf{P}^1$ (defined over $\Z[\frac{1}{2}]$) induced over $\mathbf{C}$ the classical meromorphic modular function for $\Gamma(2)$: 
\begin{equation}
\lambda(z) = 16q\prod_{n \geq 1} \left(\frac{1+q^{2n}}{1+q^{2n-1}} \right)^8
\label{q_exp_lambda}
\end{equation}
where $q = e^{i\pi z}$. 
The curve $X(2)$ has three cusps, namely $[0]_{\Gamma(2)}$, $[1]_{\Gamma(2)}$ and $[\infty]_{\Gamma(2)}$. 
We have $\lambda(0) = 1$, $\lambda(i\infty)=0$ and $\lambda(1) =  \infty$ (\textit{c.f.} \cite{Pi_AGM} p. 115).

Let $c'$ be the (unique) cusp of $X(\mathbf{C})$ above $[1]_{\Gamma(2)}$ on $X(2)(\mathbf{C})$ and above $[\infty]_{\Gamma_0(p)}$ on $X_0(p)(\mathbf{C})$ (via the forgetful maps). Let $c= w_p(c')$.

The function $\lambda$ satisfies, for all $z \in \mathfrak{H}$, 
$$\lambda(z+1) = \frac{\lambda(\frac{-1}{z})-1}{\lambda(\frac{-1}{z})} $$
(\textit{c.f.} \cite{Pi_AGM} p.115). In particular, its Fourier expansion at the cusp $[1]_{\Gamma(2)}$ of $X(2)(\mathbf{C})$ begins with $\frac{-1}{16q_1}$ where $q_1 = e^{\frac{i\pi}{1-z}}$ is a local parameter at $[1]_{\Gamma(2)}$.

From now on, if $x \in \mathbf{P}^1(\mathbf{Q})$, $q_x$ will denote a local parameter at $[x]_{\Gamma_0(p) \cap \Gamma(2)}$ on $X(\mathbf{C})$.
\begin{prop}\label{uniformisantes}
Let $q_0' = e^{-\frac{2i \pi}{pz}}$ and $q_{\infty}' = e^{2 i \pi z}$ be local parameters at the cusps $[0]_{\Gamma_0(p)}$ and $[\infty]_{\Gamma_0(p)}$ respectively on $X_0(p)(\mathbf{C})$. Let $\pi , \pi':X \rightarrow X_0(p)$ given on $\mathbf{C}$ by $z \mapsto z$ and $z \mapsto \frac{z+1}{2}$ respectively.  Then we have the following relations:
\begin{itemize}
\item $q_0' \circ \pi = q_1^2$ et $q_0' \circ \pi' = q_1^4$ at $1$,
\item $q_{\infty}' \circ \pi = q_{\frac{1}{p}}^2$ et $q_{\infty}' \circ \pi' = q_{\frac{1}{p}}^4$ at $\frac{1}{p}$,
\item$q_0' \circ \pi = q_{\frac{1}{2}}^2$ et $q_0' \circ \pi' = q_{\frac{1}{2}}$ at $\frac{1}{2}$,
\item$q_{\infty}' \circ \pi = q_{\frac{2}{p}}^2$ et $q_{\infty}' \circ \pi' = q_{\frac{2}{p}}$ at $\frac{2}{p}$,
\item$q_0' \circ \pi = q_{0}^2$ et $q_0' \circ \pi' = q_{0}$ at $0$ and
\item$q_{\infty}' \circ \pi = q_{\infty}^2$ et $q_{\infty}' \circ \pi' = q_{\infty}$ at $\infty$.

Let $\psi$ be the correspondance $X \rightarrow X_0(p)$ given on complex divisors by $(z) \mapsto 2(\frac{z+1}{2}) - (z)$. Then 
$$\psi^{-1}(([\infty]_{\Gamma_0(p)})-([0]_{\Gamma_0(p)})) = 6(([\frac{1}{p}]_{\Gamma_0(p) \cap \Gamma(2)})-([1]_{\Gamma_0(p) \cap \Gamma(2)})) \text{ .}$$
\end{itemize}
\end{prop}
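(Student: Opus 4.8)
The plan is to deduce both the list of local‑parameter identities and the divisor relation from one computation: the ramification indices of the two maps $\pi,\pi':X\to X_0(p)$ at the six cusps of $X(\mathbf{C})$. Both maps are finite of degree $6$ (for $\pi$ this is $[\Gamma_0(p):\Gamma_0(p)\cap\Gamma(2)]=|\SL_2(\mathbf{Z}/2\mathbf{Z})|=6$ after passing to $\mathrm{PSL}_2(\mathbf{Z})$, since $-I\in\Gamma(2)$; for $\pi'$ one checks on $\mathbf{C}$‑points that a generic triple over $X_0(p)$ has exactly $3\cdot 2=6$ preimages, coming from the $3$ curves $2$‑isogenous to a given one together with the $2$ orderings of the remaining nonzero $2$‑torsion). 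With the standard normalization of a local parameter $q_x=e^{2\pi i\,\sigma_x^{-1}(z)/w_x}$ at a cusp $x$ (where $\sigma_x\in\SL_2(\mathbf{Z})$ sends $\infty$ to $x$ and $w_x$ is the width), the claimed identities $q'_\bullet\circ\pi=q_x^{e}$ and $q'_\bullet\circ\pi'=q_x^{e'}$ are exactly the statements that $\pi$ (resp. $\pi'$) has ramification index $e$ (resp. $e'$) at $x$, the leading scalar being a root of unity that is inessential for the divisor identity used afterwards (and can be absorbed into the choice of local parameter on $X$). So the first step is to identify, using that $[a/c]_{\Gamma_0(p)}=[\infty]_{\Gamma_0(p)}$ iff $p\mid c$ and $=[0]_{\Gamma_0(p)}$ otherwise, the images of the six cusps $x\in\{1,\tfrac12,0,\tfrac1p,\tfrac2p,\infty\}$ under $\pi$ ($x\mapsto[x]_{\Gamma_0(p)}$) and under $\pi'$ ($x\mapsto[\tfrac{x+1}{2}]_{\Gamma_0(p)}$): in all cases both maps send $\{1,\tfrac12,0\}$ onto $[0]_{\Gamma_0(p)}$ and $\{\tfrac1p,\tfrac2p,\infty\}$ onto $[\infty]_{\Gamma_0(p)}$.

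Next I would compute the width $w_x$ of each cusp of $X$ for $\Gamma_0(p)\cap\Gamma(2)$: choosing $\sigma_x\in\SL_2(\mathbf{Z})$ with $\sigma_x(\infty)=x$, this is the least $n\geq 1$ with $\sigma_x\left(\begin{smallmatrix}1&n\\0&1\end{smallmatrix}\right)\sigma_x^{-1}\in\Gamma_0(p)\cap\Gamma(2)$, and a short matrix computation gives $w_x=2p$ for $x\in\{1,\tfrac12,0\}$ and $w_x=2$ for $x\in\{\tfrac1p,\tfrac2p,\infty\}$; meanwhile $[0]_{\Gamma_0(p)}$ has width $p$ and $[\infty]_{\Gamma_0(p)}$ has width $1$ on $X_0(p)$. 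Since $\pi$ is the forgetful map $z\mapsto z$, its ramification index at $x$ is simply $w_x/w_{\pi(x)}$, which equals $2$ in each of the six cases; this establishes the six identities $q'_0\circ\pi=q_x^2$ and $q'_\infty\circ\pi=q_x^2$.

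For $\pi'$, near a cusp $x$ write $\pi'(z)=\gamma z$ with $\gamma=\left(\begin{smallmatrix}1&1\\0&2\end{smallmatrix}\right)$, and choose $\delta\in\Gamma_0(p)$ carrying $\gamma(x)$ to the standard representative $0$ or $\infty$ of $\pi'(x)$ (such a $\delta$ exists because $\Gamma_0(p)$ has a single cusp over $[0]_{\Gamma_0(p)}$ and a single one over $[\infty]_{\Gamma_0(p)}$). Then $\delta\gamma\sigma_x$ fixes $\infty$, hence has the form $\pm\left(\begin{smallmatrix}a&b\\0&d\end{smallmatrix}\right)$ with $a,d>0$ and $ad=\det\gamma=2$; plugging this into the $q$‑expansions shows that the ramification index of $\pi'$ at $x$ equals $\dfrac{a\,w_x}{d\,w_{\pi'(x)}}$. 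Carrying this out cusp by cusp one finds $(a,d)=(2,1)$ at $x\in\{1,\tfrac1p\}$ and $(a,d)=(1,2)$ at $x\in\{\tfrac12,0,\tfrac2p,\infty\}$, so that $\pi'$ has ramification index $4$ at $1$ and at $\tfrac1p$ and ramification index $1$ at $\tfrac12,0,\tfrac2p,\infty$; this gives the remaining six identities. The divisor relation is now pure bookkeeping: $\psi$ is the correspondence $2\pi'-\pi$, so $\psi^{-1}=2(\pi')^{*}-\pi^{*}$ on divisors, and
$$\psi^{-1}\bigl([\infty]_{\Gamma_0(p)}\bigr)=2\bigl(4[\tfrac1p]+[\tfrac2p]+[\infty]\bigr)-\bigl(2[\tfrac1p]+2[\tfrac2p]+2[\infty]\bigr)=6\,[\tfrac1p]_{\Gamma_0(p)\cap\Gamma(2)},$$
and likewise $\psi^{-1}([0]_{\Gamma_0(p)})=6\,[1]_{\Gamma_0(p)\cap\Gamma(2)}$; subtracting gives the asserted equality.

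The step I expect to be the main obstacle is the ramification of $\pi'$: being a twisted (degree‑$2$ isogeny) degeneracy map rather than a forgetful one, its ramification cannot be read off from an inclusion of congruence subgroups, and one must keep track of the auxiliary matrices $\delta\in\Gamma_0(p)$, whose explicit form can depend on $p$ modulo small integers, and verify that the shape $(a,d)$ is $(2,1)$ precisely at the two cusps of $X$ lying above the cusp $[1]_{\Gamma(2)}$ of $X(2)$ (where $\lambda=\infty$). Pinning down the leading scalars in the $q$‑expansion identities to be exactly $1$ is a further, purely normalizational, point that plays no role in the divisor identity used in the sequel.
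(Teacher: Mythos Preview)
Your approach is correct and is exactly the kind of direct computation the paper intends: the paper's proof consists of the single sentence ``The relations between local parameters are direct (but tedious) computations. The reader can find an example in the proof of Proposition~1 of \cite{Merel},'' and your matrix computation of ramification indices via the upper-triangular form of $S^{-1}\delta\gamma\sigma_x$ is precisely the method used in Merel's paper. Your derivation of the divisor identity from the ramification data is likewise what the paper has in mind.
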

\begin{proof}
The relations between local parameters are direct (but tedious) computations. The reader can find an example in the proof of Proposition $1$ of \cite{Merel}. 
\end{proof}

\subsubsection{The modular unit $\mu$}\label{section_mu}
Let $\eta(z)=q^{\frac{1}{24}}\cdot \prod_{i \geq 1}(1-q^i)$ be the Dedekind eta function, where $q = e^{2 i \pi z}$, and $\Delta(z) = \eta(z)^{24}$. Let $u(z) = \left( \frac{\Delta(pz)}{\Delta(z)}\right)^{\frac{1}{d}}$; this is a rational function of $X_0(p)=M_{\Gamma_0(p)} \otimes \mathbf{Q}$ (\textit{cf.} \cite{Mazur_Eisenstein} p. 99 for the fact that we can extract the $d$-th root). 
Let $\mu$ in the function field of $X_\Q=M_{\Gamma_0(p)\cap \Gamma(2)} \otimes \Q$ be such that over $\mathbf{C}$ we have:
$$\mu(z) = \frac{u(\frac{z+1}{2})^2}{u(z)} \text{ .}$$
This is a modular unit: $\mu$ is meromorphic on $M_{\Gamma_0(p) \cap \Gamma(2)}(\mathbf{C})=X(\mathbf{C})$ with zeroes and poles at the cusps only. Since the divisor of $u$ on $X_0(p)$ is $\frac{p-1}{d} \cdot \left( ([\infty]_{\Gamma_0(p)}) - ([0]_{\Gamma_0(p)}) \right)$, Proposition \ref{uniformisantes} shows that $\mu$'s divisor is $6\cdot \frac{p-1}{d} \cdot \left( (c') - (c)\right)$. 
\begin{lemma}\label{functional_equation_mu}
We have 
$$\mu \circ w_p = p^{\frac{-12}{d}}\cdot  \mu^{-1} \text{ .}$$
\end{lemma}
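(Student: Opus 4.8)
The plan is to reduce the identity to the classical transformation formula for $\Delta$ on $X_0(p)$, and then transport it to $\mathfrak{X}$ along the two maps $\pi,\pi':X\to X_0(p)$ of Proposition~\ref{uniformisantes}.

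First I would record the analogous identity for $u$ on $X_0(p)$. Using $\Delta(-1/\tau)=\tau^{12}\Delta(\tau)$ and substituting $\tau=-1/(pz)$ into $u(\tau)=(\Delta(p\tau)/\Delta(\tau))^{1/d}$, one gets
$$u\!\left(\tfrac{-1}{pz}\right)=\left(\frac{\Delta(-1/z)}{\Delta(-1/(pz))}\right)^{1/d}=\left(\frac{z^{12}\Delta(z)}{(pz)^{12}\Delta(pz)}\right)^{1/d}=p^{-12/d}\,u(z)^{-1},$$
the $d$-th roots being taken along the same branch used to define $u$, so that there is no root-of-unity ambiguity (alternatively, one compares leading $q$-expansions at the cusps $\infty$ and $0$). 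Since the Atkin--Lehner involution $w_p$ on $X_0(p)$ is induced by $z\mapsto-1/(pz)$, this reads $u\circ w_p=p^{-12/d}\,u^{-1}$ as rational functions on $X_0(p)$.

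Next I would show that $w_p$ is compatible with $\pi$ and $\pi'$, i.e. $\pi\circ w_p=w_p\circ\pi$ and $\pi'\circ w_p=w_p\circ\pi'$ (the $w_p$ on the left being the Atkin--Lehner involution of $\mathfrak{X}$ and the one on the right that of $X_0(p)$). This is a moduli computation: by the description of $w_p$ on $X(\mathbf{C})$ recalled in \S\ref{section_cusps_N=2}, $w_p$ sends $(E,C,(P,Q))$ to $(E/C,E[p]/C,(\bar P,\bar Q))$, where the bars denote images under $E\to E/C$ (an isomorphism on torsion, as $C$ has odd order $p$). Since $\pi$ merely forgets the $\Gamma(2)$-structure this yields the first relation immediately; and since $\pi'$ (the map $z\mapsto\frac{z+1}{2}$) amounts to dividing $E$ by the $2$-torsion subgroup $\langle P+Q\rangle$ and carrying $C$ along, the second relation follows from $C\cap\langle P+Q\rangle=0$ together with the canonical identifications $(E/C)/\langle\bar P+\bar Q\rangle\cong E/\langle C,P+Q\rangle\cong(E/\langle P+Q\rangle)/\overline{C}$. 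Alternatively, these two relations can be extracted from the relations between local parameters in Proposition~\ref{uniformisantes} together with the divisor computations already made. Keeping the two isogenies and the level structures straight is the only slightly delicate point.

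Finally I would combine the two. Since $\mu=\dfrac{(u\circ\pi')^{2}}{u\circ\pi}$ by definition and constants are unaffected by composition, the compatibility relations give
$$\mu\circ w_p=\frac{\big((u\circ w_p)\circ\pi'\big)^{2}}{(u\circ w_p)\circ\pi}=\frac{\big(p^{-12/d}(u\circ\pi')^{-1}\big)^{2}}{p^{-12/d}(u\circ\pi)^{-1}}=p^{-12/d}\cdot\frac{u\circ\pi}{(u\circ\pi')^{2}}=p^{-12/d}\,\mu^{-1},$$
which is the claim. One could also argue more cheaply: the divisor of $\mu$, being $6\frac{p-1}{d}((c')-(c))$ with $w_p$ interchanging $c$ and $c'$, already forces $\mu\circ w_p=\kappa\,\mu^{-1}$ for a constant $\kappa$, and then a single leading Fourier coefficient identifies $\kappa=p^{-12/d}$; but the argument above avoids invoking the Fourier-expansion lemmas proved later.
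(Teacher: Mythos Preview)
Your proof is correct and follows essentially the same approach as the paper: both reduce to the identity $u\circ w_p=p^{-12/d}u^{-1}$ on $X_0(p)$ via the transformation law for $\Delta$, and both verify that the map $\pi':z\mapsto\frac{z+1}{2}$ commutes with $w_p$ by its moduli interpretation as quotienting by $\langle P+Q\rangle$. You are more explicit than the paper in also stating the (trivial) compatibility for $\pi$ and in writing out the final combination; your alternative divisor-plus-leading-coefficient argument at the end is a nice remark not in the paper. One minor point: the paper's $w_p$ on $X$ carries an extra twist $\epsilon:(P,Q)\mapsto(-Q,P)$ on the level structure, so your description of $w_p$ as $(\bar P,\bar Q)$ is not literally the paper's convention --- but since everything is $2$-torsion, $\langle P+Q\rangle=\langle P-Q\rangle$ and the argument is unaffected.
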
\
\begin{proof}
The map $X \rightarrow X_0(p)$ induced on complex points by $z \mapsto \frac{z+1}{2}$ commutes with $w_p$. Indeed, since $p>2$, it is given by $(E, C, (P,Q)) \mapsto (E/<P+Q>, (C+<P+Q>)/<P+Q>)$ in moduli terms (here $<P+Q>$ is the subgroup of $E$ generated by $P+Q$). We then readily see that it commutes with $w_p$. The lemma follows from the fact that:
$$u \circ w_p = p^{\frac{-12}{d}} \cdot u^{-1} \text{ .}$$
This is true since on complex points we have: 
$$u(\frac{-1}{pz}) = (\frac{\Delta(\frac{-1}{z})}{\Delta(\frac{-1}{pz})})^{\frac{1}{d}} = \frac{z^{\frac{12}{d}}}{(pz)^{\frac{12}{d}}} \cdot u^{-1} = p^{\frac{-12}{d}} \cdot u^{-1}(z)\text{ .}$$
\end{proof}

\begin{lemma}\label{Fourier_mu}
The first term of the Fourier expansion of $\mu$ at the cusp $c$ of $X$ is $p^{-\frac{12}{d}} \cdot q_1^{-\frac{6(p-1)}{d}}$.
\end{lemma}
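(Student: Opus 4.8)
The plan is to compute the leading Fourier coefficient of $\mu$ at the cusp $c$ by pulling back the $q$-expansion of $u$ along the map $\pi' : z \mapsto \frac{z+1}{2}$ and the map $\pi : z \mapsto z$, using the relations between local parameters established in Proposition \ref{uniformisantes}. Recall $\mu(z) = u(\tfrac{z+1}{2})^2/u(z)$ and $u(z) = (\Delta(pz)/\Delta(z))^{1/d}$. Since $\Delta(z) = q^{24/24}\prod(1-q^i)^{24} = q + O(q^2)$ with $q = e^{2i\pi z}$, we get $\Delta(pz)/\Delta(z) = q^{p-1}(1 + O(q))$, so $u$ has leading term $q^{(p-1)/d}$ at the cusp $[\infty]_{\Gamma_0(p)}$. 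Equivalently, in terms of the local parameter $q'_\infty = e^{2i\pi z}$ at $[\infty]_{\Gamma_0(p)}$ on $X_0(p)$, we have $u = (q'_\infty)^{(p-1)/d}(1 + O(q'_\infty))$, and a symmetric computation (or Lemma \ref{functional_equation_mu}) gives that $u$ has a pole of order $(p-1)/d$ at $[0]_{\Gamma_0(p)}$ with leading term; one must track the constant there, which is governed by $u \circ w_p = p^{-12/d} u^{-1}$, hence near $[0]_{\Gamma_0(p)}$ with parameter $q'_0 = e^{-2i\pi/(pz)}$ one finds $u = p^{-12/d}(q'_0)^{-(p-1)/d}(1 + O(q'_0))$.

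Next I would assemble these at the cusp $c$ of $X$, which by Section \ref{section_cusps_N=2} lies above $[\infty]_{\Gamma_0(p)}$ via $\pi$ and above $[0]_{\Gamma_0(p)}$ via $\pi'$ — wait, I must be careful: $c = w_p(c')$ and $c'$ lies above $[\infty]_{\Gamma_0(p)}$ and above $[1]_{\Gamma(2)}$; tracing through the forgetful maps and the action of $w_p$, $c$ is the cusp above $[0]_{\Gamma_0(p)}$ on $X_0(p)$, with local parameter $q_1$ (in the notation fixed before Proposition \ref{uniformisantes}, where $q_x$ denotes a local parameter at $[x]_{\Gamma_0(p)\cap\Gamma(2)}$, here $x = 1$). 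From Proposition \ref{uniformisantes}, at the cusp $1$ we have $q'_0 \circ \pi = q_1^2$ and $q'_0 \circ \pi' = q_1^4$. Therefore at $c$:
$$
u(z)\big|_{\pi} = u \circ \pi = p^{-12/d}(q_1^2)^{-(p-1)/d}\bigl(1 + O(q_1)\bigr), \qquad
u\bigl(\tfrac{z+1}{2}\bigr)\big|_{\pi'} = u\circ\pi' = p^{-12/d}(q_1^4)^{-(p-1)/d}\bigl(1 + O(q_1)\bigr).
$$
Substituting into $\mu = (u\circ\pi')^2/(u\circ\pi)$ gives leading term $p^{-24/d}(q_1^4)^{-2(p-1)/d}$ divided by $p^{-12/d}(q_1^2)^{-(p-1)/d}$, that is $p^{-12/d} q_1^{-8(p-1)/d + 2(p-1)/d} = p^{-12/d} q_1^{-6(p-1)/d}$, which is exactly the claimed leading term $p^{-12/d} q_1^{-6(p-1)/d}$. (The order $-6(p-1)/d$ is also consistent with the divisor of $\mu$ computed after Proposition \ref{uniformisantes}, namely $6\cdot\frac{p-1}{d}\cdot((c')-(c))$, which provides a useful independent sanity check.)

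The main obstacle I anticipate is not the $q$-expansion bookkeeping of $\Delta$ — that is routine — but rather pinning down \emph{which} cusp of $X_0(p)$ sits under $c$ along each of the two maps $\pi$ and $\pi'$, and correctly reading off the corresponding normalization of the local parameters from Proposition \ref{uniformisantes}; the ramification indices $2$ versus $4$ versus $1$ at different cusps are precisely what makes the exponent come out to $6(p-1)/d$ rather than something else, and an error there would propagate. A secondary subtlety is tracking the constant $p^{-12/d}$ through the pole at $[0]_{\Gamma_0(p)}$: this is cleanest to extract from the functional equation $u \circ w_p = p^{-12/d}u^{-1}$ of the proof of Lemma \ref{functional_equation_mu} rather than from a direct eta-quotient computation, so I would phrase the argument that way. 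Once the correct parameters and the constant are in hand, the final multiplication of leading terms is immediate.
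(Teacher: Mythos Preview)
Your proposal is correct and follows essentially the same route as the paper: both use the functional equation $u\circ w_p = p^{-12/d}u^{-1}$ to convert the expansion of $u$ at $[\infty]_{\Gamma_0(p)}$ into the expansion at $[0]_{\Gamma_0(p)}$, and then read off the leading term at $c=[1]_{\Gamma_0(p)\cap\Gamma(2)}$. The only organizational difference is that the paper substitutes $z=1+h$ directly and uses the periodicity $u(z+1)=u(z)$ to simplify, whereas you invoke the ramification relations $q_0'\circ\pi=q_1^2$ and $q_0'\circ\pi'=q_1^4$ from Proposition~\ref{uniformisantes} explicitly; these are two phrasings of the same computation.
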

\begin{proof}
Note that $c = [1]_{\Gamma_0(p) \cap \Gamma(2)}$, since $[1]_{\Gamma_0(p) \cap \Gamma(2)}$ is a pole of $\mu$ (see for example the computation below).
We have, for $z=1+h \in \mathcal{H}$:
\begin{align*}
\mu(z) &= \frac{u(1+\frac{h}{2})^2}{u(1+h)} = \frac{u(\frac{h}{2})^2}{u(h)} \\&= p^{\frac{-12}{d}} \cdot \frac{(u\circ w_p)(h)}{(u\circ w_p)(\frac{h}{2})^2} \text{ .}
\end{align*}
The assertion now follows from the fact that the expansion of $u(z)$ at the cusp $[\infty]_{\Gamma_0(p)}$ begins with $(e^{2i\pi z})^{\frac{p-1}{d}}$ (a quick computation shows that $q_1(z) = e^{\frac{i \pi}{p(1-z)}}$).
\end{proof}

\begin{lemma}\label{Fourier_lambda}
The function $\lambda \circ w_p$ of $X$ has a simple pole at $c$.
\end{lemma}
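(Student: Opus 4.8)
The plan is to exploit that the Atkin--Lehner involution $w_p$ is an automorphism of $X$ with $w_p(c)=c'$ (recall $c=w_p(c')$ by definition). An automorphism carries a local parameter at $c$ to a local parameter at $c'$, so $\lambda\circ w_p$ has at $c$ a pole of exactly the same order as the pole of $\lambda$ at $c'$. The statement therefore reduces to showing that $\lambda$, viewed as a function on $X$ via the forgetful map $f\colon X\to X(2)$, has a \emph{simple} pole at $c'$.

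First I would identify $c'$ as a cusp of $X$. Comparing the divisor $\tfrac{p-1}{d}\big(([\infty]_{\Gamma_0(p)})-([0]_{\Gamma_0(p)})\big)$ of $u$ on $X_0(p)$ with the identity for $\psi^{-1}$ in Proposition~\ref{uniformisantes}, and recalling (as used just before Lemma~\ref{functional_equation_mu}) that $\mathrm{div}(\mu)=6\,\tfrac{p-1}{d}\big((c')-(c)\big)$, one reads off $c'=[\tfrac1p]_{\Gamma_0(p)\cap\Gamma(2)}$ and $c=[1]_{\Gamma_0(p)\cap\Gamma(2)}$ (the latter matching Lemma~\ref{Fourier_mu}). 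Since $p$ is odd, $\tfrac1p$ and $1$ are $\Gamma(2)$-equivalent, so $f$ sends $c'$ to the cusp $[1]_{\Gamma(2)}$ of $X(2)$; and by \eqref{q_exp_lambda} together with the transformation law $\lambda(z+1)=\tfrac{\lambda(-1/z)-1}{\lambda(-1/z)}$, the cusp $[1]_{\Gamma(2)}$ is the unique pole of $\lambda$ on $X(2)$, and $\lambda$ has a simple pole there. Hence $\lambda$ has at $c'$ a pole of order $e$, where $e$ is the ramification index of $f$ at $c'$, and it remains to show $e=1$.

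For this I would compute cusp widths directly: a short matrix computation shows that $\Gamma_0(p)\cap\Gamma(2)$ has width $2$ at $\tfrac1p$ and width $2p$ at $1$, while $\Gamma(2)$ has width $2$ at their common image cusp; hence $f$ is unramified at $c'=[\tfrac1p]$ (and ramified of degree $p$ at $c=[1]$, consistently with $\deg f=p+1$). One can also avoid any new computation: by Proposition~\ref{uniformisantes} the map $\pi\colon X\to X_0(p)$, $z\mapsto z$, is ramified of index $2$ at $[\tfrac1p]$, a cusp lying over $[\infty]_{\Gamma_0(p)}$; since $[\infty]_{\Gamma_0(p)}$ has width $1$ over the $j$-line whereas $[1]_{\Gamma(2)}$ has width $2$ over the $j$-line, comparing the ramification of $c'$ over the $j$-line through $X_0(p)$ (namely $2\cdot 1$) and through $X(2)$ (namely $e\cdot 2$) forces $e=1$. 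Either way $\lambda$ has a simple pole at $c'$, hence $\lambda\circ w_p$ has a simple pole at $c$.

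I do not expect a genuine obstacle here: the argument is pure bookkeeping with cusps, widths and ramification indices, combined with the already-recorded local behaviour of $\lambda$ near $[1]_{\Gamma(2)}$. The one step that deserves care is the identification $c'=[\tfrac1p]_{\Gamma_0(p)\cap\Gamma(2)}$, together with the verification that this cusp lies over $[1]_{\Gamma(2)}$ on $X(2)$ and over $[\infty]_{\Gamma_0(p)}$ on $X_0(p)$; once that is set up correctly, the simple-pole assertion follows formally.
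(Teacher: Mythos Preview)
Your proposal is correct and follows essentially the same route as the paper: both arguments reduce to showing that $\lambda$ (pulled back to $X$) has a simple pole at $c'$, note that $c'$ lies over $[1]_{\Gamma(2)}$ where $\lambda$ has a simple pole, and then check that the forgetful map $X\to X(2)$ is unramified at $c'$. The paper simply asserts the unramifiedness (``a computation shows\ldots''), whereas you supply the explicit cusp identification $c'=[\tfrac1p]_{\Gamma_0(p)\cap\Gamma(2)}$ and the width computation, but the underlying strategy is identical.
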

\begin{proof}
We saw that $\lambda$ has a simple pole at the cusp $[1]_{\Gamma(2)}$. A computation shows that $X$ is unramified at $c'$ over $X(2)$ (relatively to the standard projection $z \mapsto z$ of upper-half planes). Since $c'$ maps to $[1]_{\Gamma(2)}$, $\lambda$ (viewed as a function on $X$) also has a simple pole at $c'$.
\end{proof}

\subsection{Case $N=3$}\label{N=3}
\subsubsection{The cusps}
We assume $p \equiv 1 \text{ (mod }3 \text{)}$. Let $X = M_{\Gamma_0(p) \cap K}$ over $\mathbf{Z}[\frac{1}{3}]$, where $K$ is congruence subgroup of $\text{SL}_2(\mathbf{Z})$ corresponding to the diagonal matrices modulo $3$. By Remark \ref{champ N=3}, $X$ is geometrically irreducible and we have a natural isomorphism
$$X(\mathbf{C}) \simeq (\Gamma_0(p) \cap \Gamma(3)) \backslash (\mathcal{H} \cup \mathbf{P}^1 (\mathbf{Q}))  \text{ .}$$ 

The curve $X(\mathbf{C})$ has $8 = 2 \cdot \text{Card}(\mathbf{P}^1(\mathbf{F}_3))$ cusps, namely $[0]_{\Gamma_0(p) \cap \Gamma(3)}$, $[1]_{\Gamma_0(p) \cap \Gamma(3)}$, $[\frac{1}{3}]_{\Gamma_0(p) \cap \Gamma(3)}$, $[\frac{1}{2}]_{\Gamma_0(p) \cap \Gamma(3)}$, $[\infty]_{\Gamma_0(p) \cap \Gamma(3)}$, $[\frac{2}{p}]_{\Gamma_0(p) \cap \Gamma(3)}$, $[\frac{1}{p}]_{\Gamma_0(p) \cap \Gamma(3)}$, $[\frac{3}{p}]_{\Gamma_0(p) \cap \Gamma(3)}$ (the first three are above the cusp $[0]_{\Gamma_0(p)}$ of $M_{\Gamma_0(p)}(\C)$, and the last three are above $[\infty]_{\Gamma_0(p)}$; $w_p$ exchanges the first group of three cusps with the second group). The genus of $X$ is $g=p-2$, so $X$ has $g+1 = p-1$ supersingular points in its special fiber $p$.
We choose the Hauptmodul $H:M_K \xrightarrow{\sim} \mathbf{P}^1$ (defined over $\Z[\frac{1}{3}]$) such that over $\mathbf{C}$ we get the classical meromorphic function for $\Gamma(3)$: 
\begin{equation}
H(z) = \left( \frac{\eta(\frac{z}{3(1-z)})}{\eta(\frac{3z}{1-z})} \right)^3
\label{q_exp_mu3}
\end{equation}
where $\eta$ is the Dedekind eta function. 

Let $c'$ be the (unique) cusp of $X(\mathbf{C})$ above $[1]_{\Gamma(3)}$ on $X(3)(\mathbf{C})$ and above $[\infty]_{\Gamma_0(p)}$ on $X_0(p)(\mathbf{C})$ (via the forgetful maps). Let $c= w_p(c')$.

The function $H$ has a pole of order $1$ at the cusp $[1]_{\Gamma(3)}$ of $M_H$ (because a local parameter at $[1]_{\Gamma(3)}$ is $e^{\frac{-2i \pi z}{3(z-1)}}$). 
For $0 \leq i \leq 2$, let $\pi_i(z) = \frac{z+i}{3}$, $\pi_{-1}(z) = z$ and $\pi_3(z) = 3z$. Then the $\pi_i$'s induce maps $X \rightarrow X_0(p)$. We computed in table \ref{ramification} the ramification index of these maps at the cusps of $X$ above $[0]_{\Gamma_0(p)}$ (the result being the same for the cusps above $[\infty]_{\Gamma_0(p)}$, thanks to $w_p$). 

\subsubsection{The modular unit $\mu$}
We define $$\mu(z) = \frac{u(\pi_2(z))^3}{u(\pi_{-1}(z))} = \frac{u(\frac{z+2}{3})^3}{u(z)}$$
where, as in the case $N=2$, $u(z) = \left(\frac{\Delta(pz)}{\Delta(z)}\right)^{\frac{1}{d}}$ (with $d = \text{gcd}(p-1,12)$). The divisor of $\mu$ is $24\cdot \frac{p-1}{d} \cdot \left( (c') - (c)\right)$. Its degree on the various annuli is $\frac{24}{d}$. We have 
$$\mu \circ w_p = p^{\frac{-12}{d}} \cdot \mu^{-1} \text{ .}$$
Finally, $H$ (viewed as a function on $X$) has a double pole at $c'$.
\begin{table}[]
\centering
\caption{Ramification index at the cusps}
\label{ramification}
\begin{tabular}{|l|l|l|l|l|l|l|}
  \hline
  Cusp of X & Local parameter & $\pi_{-1}$ & $\pi_0$ & $\pi_1$ & $\pi_2$ & $\pi_3$\\
  \hline
  $0$ & $e^{\frac{-2i \pi}{3pz}}$ & 3 & 1 & 1 & 1 & 9 \\
  $\frac{1}{3}$ & $e^{\frac{-2i \pi}{9p(3z-1)}} $& 3 & 1 & 1 & 1 & 9 \\
  $1$ & $e^{\frac{-2i \pi}{3p(z-1)}}$ & 3 & 1 & 1 & 9 & 1\\
  $\frac{1}{2}$ &$e^{\frac{-i \pi}{3p(2z-1)}}$&3 & 1 & 9 & 1 & 1\\
  \hline
\end{tabular}
\end{table}

We define:
$$\nu(z) = \left( \prod_{0 \leq \ell \leq g} \theta(\xi_{c'}^{(\ell)},z_0,z) \right)^{\frac{24}{d}} \text{ .}$$

The same proof as in Proposition \ref{mu_nu} shows that $\mu$ and $\nu$ are proportionals. A Fourier expansion computation shows the analogue of Theorem \ref{main-thm} in the case $N=3$, with $\lambda$ remplaced by $H$.

\end{document}